\numberwithin{equation}{section}
\theoremstyle{plain}
\newtheorem{thm}{Theorem}[section]
\newtheorem{lem}[thm]{Lemma}
\theoremstyle{definition}
\newtheorem{defn}[thm]{Definition}
\newtheorem{exa}[thm]{Example}
\theoremstyle{remark}
\crefname{thm}{Thm.}{Thms.}
\crefname{prop}{Prop.}{Props.}
\crefname{lem}{Lem.}{Lems.}
\crefname{cor}{Cor.}{Cors.}
\crefname{conj}{Conj.}{Conjs.}
\crefname{defn}{Def.}{Defs.}
\crefname{exa}{Exa.}{Exas.}
\crefname{rem}{Rem.}{Rems.}
\newcommand{\F}{\mathbb{F}}
\newcommand{\Q}{\mathbb{Q}}
\newcommand{\C}{\mathbb{C}}
\newcommand{\PP}{\mathbb{P}}
\DeclareMathOperator{\Gal}{Gal}
\DeclareMathOperator{\PSL}{PSL}
\DeclareMathOperator{\Stab}{Stab}
\DeclareMathOperator{\res}{Res}
\newcommand{\Res}[3]{\operatorname{Res}_{#1}\!\big(#2,#3\big)}
\lstdefinestyle{sagepy}{
  language=Python,
  basicstyle=\ttfamily\small,
  columns=fullflexible,
  keepspaces=true,
  showstringspaces=false,
  upquote=true,
  frame=single,
  breaklines=true,
  tabsize=2
}
\pgfplotsset{compat=1.18}
\newif\ifcomments
\title{From Polynomials to Databases: Arithmetic Structures in Galois Theory}
\author{Jurgen Mezinaj}
\address{Department of Mathematics and Statistics, \\
Oakland University, \\
Rochester, MI, 48309}
\keywords{Galois theory, resolvent polynomials, invariant theory, transitive groups, computational algebra, machine learning in mathematics, inverse Galois problem, neurosymbolic methods, algebraic invariants, arithmetic statistics}
\subjclass[2020]{Primary 12F10, 68T05; Secondary 11R32, 13A50, 20B35, 68W30}
\begin{document}

\maketitle

\begin{abstract}
We develop a computational framework for classifying Galois groups of irreducible degree-7 polynomials over~$\Q$, combining explicit resolvent methods with machine learning techniques. A database of over one million normalized projective septics is constructed, each annotated with algebraic invariants~$J_0, \dots, J_4$ derived from binary transvections. For each polynomial, we compute resolvent factorizations to determine its Galois group among the seven transitive subgroups of~$S_7$ identified by Foulkes. Using this dataset, we train a neurosymbolic classifier that integrates invariant-theoretic features with supervised learning, yielding improved accuracy in detecting rare solvable groups compared to coefficient-based models. The resulting database provides a reproducible resource for constructive Galois theory and supports empirical investigations into group distribution under height constraints. The methodology extends to higher-degree cases and illustrates the utility of hybrid symbolic-numeric techniques in computational algebra.
\end{abstract}


\section{Introduction}\label{sec:intro}

Galois theory provides a foundational framework for understanding the solvability of polynomial equations in terms of field extensions and their automorphism groups. Originating in the early 19th century, the theory explains why radical solutions exist for quadratic, cubic, and quartic equations, but not in general for quintics or higher degrees, due to the non-solvability of the symmetric group \(S_n\) for \(n \geq 5\).

Subsequent developments by Lagrange, Jordan, Klein, and others introduced computational tools such as resolvent polynomials to study the structure of Galois groups. These tools have become central to both theoretical investigations and practical applications in number theory and algebraic geometry. A longstanding open problem in the area, the \emph{Inverse Galois Problem}, asks whether every finite group arises as the Galois group of a field extension over \(\Q\). While the problem remains unresolved in general, progress has been made for several classes, including abelian and many solvable groups. The problem is closely related to Hilbert's 12th problem, which seeks explicit descriptions of abelian extensions via transcendental methods. Although certain abelian cases have been addressed through class field theory and complex multiplication, non-abelian generalizations are still not well understood.

This work investigates the Galois groups of irreducible septic polynomials over \(\Q\), integrating classical algebraic techniques with modern computational tools. In particular, we construct a large-scale database of over 1.18 million irreducible septics (filtered by height \(h \leq 4\)), enriched with algebraic invariants derived from transvections of binary forms. Each polynomial is analyzed through explicit resolvent constructions to identify its Galois group among the seven transitive subgroups of \(S_7\): \(S_7\), \(A_7\), \(\PSL(3,2)\), \(C_7 \rtimes C_6\), \(C_7 \rtimes C_3\), \(D_7\), and \(C_7\).

Our method combines symbolic invariant computations~\cite{Sha-1} with supervised learning via neurosymbolic networks~\cite{Sha-2}, trained to classify Galois groups using both raw coefficients and invariant-based features. The use of projective normalization and height bounds follows the approach of~\cite{Sha-3}, enabling scalable enumeration while ensuring algebraic diversity. Inspired by connections to monodromy groups of superelliptic curves and their Igusa invariants~\cite{book}, we explore how invariant-theoretic features correspond to root symmetries and impact classification accuracy.

Experimental results indicate that symbolic invariants enhance the identification of rare Galois groups (e.g., solvable groups such as \(C_7 \rtimes C_6\)), particularly when applied to datasets where the dominant class \(S_7\) is removed. The inclusion of invariants significantly improves classification performance in imbalanced settings. These findings support the viability of hybrid symbolic-ML pipelines for automating group identification in computational Galois theory.

This study builds on and extends previous work in invariant-based arithmetic statistics~\cite{Sha-1}, hybrid symbolic/ML models~\cite{Sha-2}, and moduli space enumeration~\cite{Sha-3}. By explicitly constructing resolvent polynomials tailored for distinguishing subgroups of \(S_7\), following techniques from~\cite{book}, we enable large-scale validation of classical methods and uncover empirical patterns in Galois group distributions.

Recent research has demonstrated the potential of machine learning in number-theoretic contexts, including graded neural architectures~\cite{Shaska-graded}, rational function classification~\cite{2024-04}, and cryptographic moduli spaces~\cite{Sha-3}. Our contribution fits within this broader direction, applying computational techniques to the explicit realization of Galois groups and the analysis of their statistical properties.

The structure of the paper is as follows. Section~\ref{sec:prelims} introduces background on Galois extensions and invariants. Section~\ref{sec:resolvents} reviews resolvent polynomials and their factorization for septics. Section~\ref{sec:database} describes the construction and augmentation of the septic polynomial database. Section~\ref{sec:ml-septics} presents machine learning classification results. Section~\ref{sec:background} discusses the realization of cyclic and solvable groups through constructive methods. Finally, Section~\ref{sec:conclusions} outlines conclusions and future directions.

The overarching aim is to facilitate scalable and interpretable approaches to classifying Galois groups, with potential applications in arithmetic statistics, inverse Galois realizations, and computational algebra.

\section{Preliminaries on Galois theory and septics polynomials}\label{sec:prelims}

Let \(\F\) be a perfect field with characteristic \(\operatorname{char}(\F) = 0\). Consider a polynomial \(f(x) \in \F[x]\). We say that \(f(x)\) is \emph{irreducible} over \(\F\) if it cannot be expressed as the product of two non-constant polynomials with coefficients in \(\F\); that is, if there do not exist non-constant polynomials \(g(x), h(x) \in \F[x]\) such that \(f(x) = g(x)h(x)\). For any polynomial \(f(x) \in \F[x]\), there exists an extension field \(K\) of \(\F\) containing at least one root \(\alpha\) of \(f(x)\), and the smallest such extension \(E_f\) in which \(f(x)\) factors completely into linear factors is called its \emph{splitting field} over \(\F\).

\subsection{Galois groups of irreducible polynomials}
A polynomial \(f(x) \in \F[x]\) of degree \(n\) is \emph{separable} if it has \(n\) distinct roots in its splitting field, or equivalently, if it factors as \(f(x) = (x - \alpha_1) \cdots (x - \alpha_n)\) with distinct \(\alpha_i\) in \(E_f\). An extension \(E\) of \(\F\) is a \emph{separable extension} if every element of \(E\) is a root of some separable polynomial in \(\F[x]\). Given a finite field extension \(K / \F\), we say that \(K\) is \emph{Galois} over \(\F\), or that \(K / \F\) is a Galois extension, if
\[
|\text{Aut}(K / \F)| = [K : \F],
\]
where \(\text{Aut}(K / \F)\) denotes the group of automorphisms of \(K\) fixing \(\F\), and \([K : \F]\) is the degree of the extension. In this case, \(\text{Aut}(K / \F)\) is called the \emph{Galois group} of \(K / \F\), denoted \(Gal(K / \F)\).\\
A field extension \(E/F\) is called \emph{normal} if every irreducible polynomial in \(F[x]\) with a root in \(E\) splits completely over \(E\).
A finite extension \(E/F\) is called a \emph{Galois extension} if it is both normal and separable.
\begin{thm}[Fundamental Theorem of Galois Theory]
Let \(E/F\) be a finite Galois extension with Galois group \(G = \Gal(E/F)\). Then:
\begin{enumerate}
    \item There is a bijection between the set of intermediate fields \(F \subseteq K \subseteq E\) and the set of subgroups \(H \leq G\), given by \(K \mapsto \Gal(E/K)\) and \(H \mapsto E^H\).
    \item For any intermediate field \(K\), we have
    \[
    [E:K] = |\Gal(E/K)|, \qquad [K:F] = [G:\Gal(E/K)].
    \]
    \item The extension \(K/F\) is normal if and only if \(\Gal(E/K)\) is a normal subgroup of \(G\). In this case,
    \[
    \Gal(K/F) \cong G / \Gal(E/K).
    \]
\end{enumerate}
\end{thm}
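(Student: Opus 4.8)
The plan is to construct the two maps of the correspondence, verify they are mutually inverse, and then read off the degree and normality statements as formal consequences. Define $\Phi(K) = \Gal(E/K)$ for each intermediate field $F \subseteq K \subseteq E$ and $\Psi(H) = E^H$ for each subgroup $H \leq G$. First I would record the standing fact that $E/K$ is again Galois for every intermediate $K$: separability is inherited by any subextension, and normality of $E/F$ means $E$ is a splitting field of some $f \in F[x] \subseteq K[x]$, so $E/K$ is normal as well. In particular $|\Gal(E/K)| = [E:K]$ throughout, and this identity is the engine of the whole argument.

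The technical core is the identity $\Phi \circ \Psi = \mathrm{id}$, i.e.\ $\Gal(E/E^H) = H$ for every subgroup $H \leq G$. The inclusion $H \subseteq \Gal(E/E^H)$ is immediate from the definition of $E^H$. For the reverse inclusion I would invoke Artin's lemma: if $H$ is a finite group of automorphisms of a field $E$, then $[E : E^H] \leq |H|$. Since $E/E^H$ is Galois we get $|\Gal(E/E^H)| = [E:E^H] \leq |H| \leq |\Gal(E/E^H)|$, forcing equality and hence $\Gal(E/E^H) = H$. For the opposite composite $\Psi \circ \Phi = \mathrm{id}$, fix an intermediate field $K$; then $K \subseteq E^{\Gal(E/K)}$ is trivial, while $[E : E^{\Gal(E/K)}] = |\Gal(E/K)| = [E:K]$ by the part just proved together with the Galois property of $E/K$, so the tower law collapses the chain $K \subseteq E^{\Gal(E/K)} \subseteq E$ to an equality. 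This proves (1). Statement (2) is then purely formal: $[E:K] = |\Gal(E/K)|$ is the Galois property of $E/K$, and since $|G| = [E:F]$ one has $[K:F] = [E:F]/[E:K] = |G|/|\Gal(E/K)| = [G : \Gal(E/K)]$.

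For (3), the key observation is that for any $\sigma \in G$ an automorphism fixes $\sigma(K)$ pointwise precisely when it lies in $\sigma\,\Gal(E/K)\,\sigma^{-1}$, so $\Gal(E/\sigma(K)) = \sigma\,\Gal(E/K)\,\sigma^{-1}$; under the bijection of (1) the conjugates of $\Gal(E/K)$ therefore correspond exactly to the subfields $\sigma(K)$. Since $E/F$ is normal and $K \subseteq E$, every $F$-embedding of $K$ into an algebraic closure extends to an element of $G$, whence $K/F$ is normal if and only if $\sigma(K) = K$ for all $\sigma \in G$; by the previous sentence and the injectivity of $\Psi$ this is equivalent to $\Gal(E/K)$ being a normal subgroup of $G$. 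In that case restriction to $K$ defines a homomorphism $G \to \Gal(K/F)$ with kernel exactly $\Gal(E/K)$, and I would establish surjectivity via the isomorphism-extension theorem: any $F$-automorphism of $K$ extends to an $F$-embedding $E \to E$, which is an automorphism because $E/F$ is normal. The first isomorphism theorem then gives $\Gal(K/F) \cong G/\Gal(E/K)$.

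The main obstacle is Artin's lemma, the inequality $[E:E^H] \leq |H|$: this is the one genuinely non-formal input, proved either through Dedekind's linear independence of characters or by a direct linear-algebra estimate on the size of an $E^H$-independent set acted on by $H$. A secondary point needing care is the surjectivity of the restriction map in (3), where one must appeal to an extension theorem for embeddings rather than argue by cardinality alone.
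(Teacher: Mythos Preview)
Your proof is correct and follows the standard textbook route (Artin's lemma for $\Gal(E/E^H)=H$, degree counting for $E^{\Gal(E/K)}=K$, conjugation for normality, and the restriction homomorphism for the quotient isomorphism). The paper itself gives only a one-paragraph sketch---mentioning fixed fields, Lagrange's theorem, and the quotient isomorphism---and defers the full argument to \cite[Chapter~1]{book}; your write-up supplies exactly the details the paper omits, so the approaches coincide.
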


\begin{proof}
The proof follows from the properties of fixed fields and the fact that for a Galois extension, the Galois group acts faithfully. The bijection is anti-inclusion: larger subgroups fix smaller fields. The degree relations come from Lagrange's theorem in the group. Normality of the subgroup corresponds to normality of the extension via the quotient group isomorphism. For a full proof, see \cite[Chapter 1]{book}.
\end{proof}

Let \(L/F\) be a finite field extension. For \(\theta \in L\), the \emph{norm} and \emph{trace} of \(\theta\) over \(F\) are defined as
\[
N_{L/F}(\theta) = \prod_{\sigma \in \Gal(L/F)} \sigma(\theta), \qquad
T_{L/F}(\theta) = \sum_{\sigma \in \Gal(L/F)} \sigma(\theta),
\]
when \(L/F\) is Galois.
A Galois extension \(L/F\) is called \emph{cyclic} if its Galois group \(\Gal(L/F)\) is a cyclic group.

\begin{thm}
Let \(F\) be a field containing a primitive \(n\)-th root of unity. Assume \(\operatorname{char}(F)=p\) with \((n,p)=1\). Then the following are equivalent:
\begin{enumerate}
    \item \(L/F\) is a cyclic extension of degree \(d \mid n\).
    \item \(L = F(\theta)\) where \(\theta\) satisfies \(\theta^d = a \in F\).
    \item \(L\) is the splitting field of \(x^d - a\) over \(F\), for some \(a \in F\).
\end{enumerate}
\end{thm}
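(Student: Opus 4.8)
The plan is to prove the equivalence by establishing the cycle of implications $(1)\Rightarrow(2)\Rightarrow(3)\Rightarrow(1)$. Two of these are essentially bookkeeping with roots of unity, made possible by the hypothesis that $F$ already contains a primitive $n$-th root of unity (hence a primitive $d$-th root of unity $\zeta$ for every $d\mid n$); the real content is in $(1)\Rightarrow(2)$, which is the Kummer-theory step and relies on Hilbert's Theorem 90 (equivalently, on Dedekind's linear independence of characters via a Lagrange resolvent).

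For $(2)\Rightarrow(3)$, given $L=F(\theta)$ with $\theta^d=a\in F$, I would observe that $x^d-a=\prod_{i=0}^{d-1}(x-\zeta^i\theta)$ splits completely over $F(\theta)=L$, while no proper subfield of $L$ containing $F$ can contain $\theta$; hence $L$ is precisely the splitting field of $x^d-a$. For $(3)\Rightarrow(1)$: if $a=0$ then $L=F$ is cyclic of degree $1\mid n$, so assume $a\neq 0$. Since $d\mid n$ and $(n,p)=1$ we have $(d,p)=1$, so $\gcd(x^d-a,\,dx^{d-1})=1$ and $x^d-a$ is separable; thus $L/F$ is Galois. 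Fixing a root $\theta_0$, every root is $\zeta^i\theta_0$, so each $\sigma\in\Gal(L/F)$ is determined by $\sigma(\theta_0)=\zeta^{c(\sigma)}\theta_0$ with $c(\sigma)\in\mathbb{Z}/d\mathbb{Z}$. Because $\zeta\in F$ is fixed, $c$ is a homomorphism $\Gal(L/F)\to\mathbb{Z}/d\mathbb{Z}$, and it is injective since $\theta_0$ generates $L$; therefore $\Gal(L/F)$ embeds into the cyclic group $\mathbb{Z}/d\mathbb{Z}$ and is cyclic of order dividing $d\mid n$.

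For the main implication $(1)\Rightarrow(2)$, I would write $\Gal(L/F)=\langle\sigma\rangle$ with $|\sigma|=d\mid n$ and take $\zeta\in F$ a primitive $d$-th root of unity. Since $\zeta^{-1}\in F$ is fixed by all $d$ automorphisms, $N_{L/F}(\zeta^{-1})=\zeta^{-d}=1$, so Hilbert's Theorem 90 produces $\theta\in L^\times$ with $\sigma(\theta)/\theta=\zeta^{-1}$; alternatively one constructs $\theta=\sum_{i=0}^{d-1}\zeta^i\sigma^i(c)$ directly, nonzero for a suitable $c\in L$ by the linear independence of the characters $1,\sigma,\dots,\sigma^{d-1}$, and checks $\sigma(\theta)=\zeta^{-1}\theta$. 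Then $\sigma(\theta^d)=\zeta^{-d}\theta^d=\theta^d$, so $\theta^d=:a\in F$, and the conjugates $\theta,\zeta^{-1}\theta,\dots,\zeta^{-(d-1)}\theta$ are pairwise distinct, whence $[F(\theta):F]\ge d=[L:F]$ and $L=F(\theta)$.

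The main obstacle is the existence of the resolvent element $\theta$ in $(1)\Rightarrow(2)$: everything there rests on having Hilbert's Theorem 90 available, or else on proving the linear-independence-of-characters lemma to guarantee the Lagrange resolvent is nonzero. I would invoke Hilbert 90 (citing \cite{book}) as the cleanest route. A minor point to track carefully is the interplay between "degree $d$" and "degree dividing $d$": in $(3)\Rightarrow(1)$ the Galois group is only a \emph{subgroup} of $\mathbb{Z}/d\mathbb{Z}$, so the honest conclusion is "cyclic of degree dividing $n$," which is consistent with the statement as phrased, and the separability hypothesis $(n,p)=1$ is used precisely to ensure $x^d-a$ is separable and the extension is Galois.
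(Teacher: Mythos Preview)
Your proof is correct and follows the standard Kummer-theory argument via Hilbert's Theorem~90 (or equivalently Lagrange resolvents), which is precisely what the paper invokes: the paper's own proof is only a one-sentence appeal to Kummer theory with a reference to \cite[Chapter 1]{book}, so you have simply supplied the details that the paper elides. There is no substantive difference in approach.
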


\begin{proof}
The equivalence is from Kummer theory: the cyclic extension is generated by a d-th root, and the splitting field of the Kummer equation \(x^d - a = 0\) has Galois group cyclic of order d when the base field contains the d-th roots of unity. For details, see \cite[Chapter 1]{book}.
\end{proof}

Let \(n \in \mathbb{Z}_{>0}\). The \(n\)-th \emph{cyclotomic extension} of \(F\) is the splitting field of \(x^n-1\) over \(F\), denoted \(F(\zeta_n)\), where \(\zeta_n\) is a primitive \(n\)-th root of unity.
Now, consider an irreducible and separable polynomial \(f(x) \in \F[x]\) of degree \(n\), factoring in its splitting field \(E_f\) as
\[
f(x) = (x - \alpha_1) \cdots (x - \alpha_n),
\]
where \(\alpha_1, \ldots, \alpha_n\) are distinct roots. Since \(E_f\) is both normal (as it is a splitting field) and separable over \(\F\), it is a Galois extension. The Galois group of the polynomial, \(Gal_{\F}(f) = Gal(E_f / \F)\), consists of automorphisms permuting the roots \(\alpha_1, \ldots, \alpha_n\). For any distinct roots \(\alpha_i\) and \(\alpha_j\), there exists \(\sigma \in Gal_{\F}(f)\) such that \(\sigma(\alpha_i) = \alpha_j\), implying that \(Gal_{\F}(f)\) acts transitively on the roots. Thus, \(Gal_{\F}(f)\) embeds naturally into the symmetric group \(S_n\), the group of all permutations on \(\{1, 2, \ldots, n\}\), with this embedding defined uniquely up to conjugacy due to the arbitrary ordering of the roots.

\subsection{Septic polynomials}\label{sec:septic polynomials}

Let \(f(x)\in \Q[x]$ be a monic irreducible septic polynomial given by
\begin{equation}\label{eq:f-septic}
 f(x) = x^7 + a_6 x^6 + \cdots + a_0  
 \end{equation}
 with roots \(\alpha_1, \dots, \alpha_7\) in its splitting field \(E_f\) over \(\Q\). 
 The Galois group \(G = \Gal(E_f / \Q)\) is a transitive subgroup of \(S_7\) (up to conjugacy). 
Using GAP, we can compute all transitive subgroups of \( S_n \) for a given \( n \).\\
For our case degree 7, these are seven transitive subgroups of $S_7$: 

\begin{table}[htbp]
\caption{Transitive Subgroups of \(S_7\) and Their Orders}
\label{tab:transitive_subgroups_S7}
\centering
\begin{tabular}{|c|c|c|}
\hline
 & \textbf{Subgroup} & \textbf{Order} \\
\hline
1 & \(C_7\) & \(7\) \\
2 & \(D_7\) & \(14\) \\
3 & \(C_7 \rtimes C_3\) & \(21\) \\
4 & \( C_7 \rtimes C_6\) & \(42\) \\
5 & \(L(3,2)\) & \(168\) \\
6 & \(A_7\) & \(2520\) \\
7 & \(S_7\) & \(5040\) \\
\hline
\end{tabular}
\end{table}

\begin{figure}[htbp] 
   \centering
   \includegraphics[width=2in]{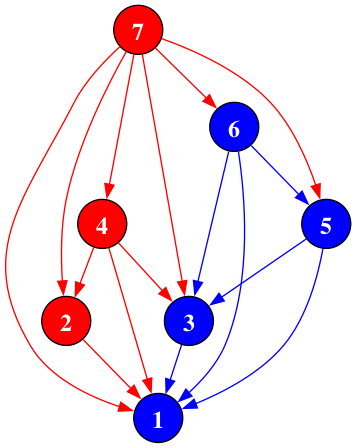} 
   \caption{The lattice of subgroups of $S_7$}
   \label{fig:S7}
\end{figure}
The relationships among these transitive subgroups form the following lattice (Figure \ref{fig:S7}), with key inclusion chains as follows:
            \[
            C_7 \subset D_7 \subset C_7 \rtimes C_3
            \]
            \[
             \quad C_7 \subset C_7 \rtimes C_3 \subset L(3,2)   \subset A_7,
            \]
            \[
             \quad C_7 \rtimes C_3 \subset C_7 \rtimes C_6
            \]
\subsection{Invariants of septics}

The action of \(Gal_{\F}(f)\) on the roots suggests a natural connection to invariant theory, where we seek polynomials in the roots that remain unchanged under this group action.
Finding the generators of the ring of invariants \(\mathcal{R}_d\) is a classical problem tackled by many XIX-century mathematicians. Such invariants are generated in terms of transvections or root differences. For binary forms \(f, g \in \mathcal{V}_d\), the \(r\)-th transvection \((f, g)_r\) is defined as a differential operator applied to the forms as detailed in \cite{curri2021}.

For degree 7, a generating set of \(\mathcal{R}_7\) is given by \(\xi = [\xi_0, \xi_1, \xi_2, \xi_3, \xi_4]\) with weights \(\mathbf{w} = (4, 8, 12, 12, 20\). We define them as follows. Let
\[
c_1 = (f, f)_6, \quad c_2 = (f, f)_4, \quad c_4 = (f, c_1)_2, \quad c_5 = (c_2, c_2)_4, \quad c_7 = (c_4, c_4)_4
\]
and
\[
\begin{aligned}
& \xi_0 = (c_1, c_1)_2, \quad \xi_1 = (c_7, c_1)_2, \quad \xi_2 = ((c_5, c_5)_2, c_5)_4, \\
& \xi_3 = ((c_4, c_4)_2, c_1^3)_6, \quad \xi_4 = ([(c_2, c_5)_4]^2, (c_5, c_5)_2)_4
\end{aligned}
\]
These invariants form a basis for \(\mathcal{R}_7\), capturing the \(\mathrm{SL}_2(\mathbb{Q})\)-invariant properties of binary septic forms. They are used as features in our neurosymbolic network to classify Galois groups.

A fundamental invariant for a monic septic polynomial \(f(x)  \in \mathbb{Q}[x]\) with roots \(\alpha_1, \ldots, \alpha_7\) in its splitting field \(E_f\) is the \emph{discriminant}, defined as
\[
\Delta_f = \prod_{1 \leq i < j \leq 7} (\alpha_i - \alpha_j)^2.
\]
Equivalently, it can be computed via the resultant:
\[
\Delta_f = (-1)^{21} \res(f, f') = -\res(f, f'),
\]
where \(f'(x)\) is the formal derivative of \(f\). The resultant is the determinant of the \(13 \times 13\) Sylvester matrix constructed from the coefficients of \(f\) and \(f'\), making \(\Delta_f\) symbolically computable for degree 7. Moreover, \(\Delta_f \neq 0\) if and only if \(f\) has distinct roots, and \(\Delta_f > 0\) if and only if the Galois group is contained in \(A_7\)

\section{Resolvents}\label{sec:resolvents}
To aid in computing \(Gal(f)\), we introduce the \emph{resolvent polynomial} for a degree $n$ polynomial and then apply the results for $n=7$. 

Given a function \(F(x_1, \ldots, x_n) \in K[x_1, \ldots, x_n]\), often a polynomial symmetric under some subgroup of \(S_n\), we define the resolvent polynomial associated with a polynomial \(f\), a subgroup \(G \subseteq S_n\), and \(F\) as follows:

\begin{defn}
The resolvent polynomial \( \Res{G}{f}{F} (x) \) is given by
\[
\Res{G}{f}{F} (x) = \prod_{\sigma \in G / H} (x - \theta_\sigma),
\]
where \(\theta_\sigma = F(r_{\sigma(1)}, \ldots, r_{\sigma(n)})\), the roots \(r_1, \ldots, r_n\) (sometimes denoted \(\alpha_1, \ldots, \alpha_n\)) are those of \(f(x)\) in its splitting field, and 
\[
H = \{ \tau \in G \mid F(r_{\tau(1)}, \ldots, r_{\tau(n)}) = F(r_1, \ldots, r_n) \}
\]
 is the stabilizer of \(F\) under the action of \(G\). The product is taken over coset representatives of \(G / H\), ensuring each distinct value \(\theta_\sigma\) appears exactly once, and the degree of the resolvent is \(k = |G| / |H|\), the index of \(H\) in \(G\).
\end{defn}

The roots \(\theta_\sigma\) form the orbit of \(\theta_e = F(r_1, \ldots, r_n)\) under \(G\), with their distinctness depending on the symmetry of \(F\). For example, if \(F = x_1\) and \(G = S_n\), then \(H = S_{n-1}\) (fixing the first index), \(k = n\), and \(\Res{S_n}{f}{x_1} = f(x)\); if \(F\) is fully symmetric, \(H = G\) and \(k = 1\). To compute \(\Res{G}{f}{F} (x)\) symbolically, we express it as
\[
\Res{G}{f}{F} (x) = x^k - e_1 x^{k-1} + e_2 x^{k-2} - \cdots + (-1)^k e_k,
\]
where \(e_j\) are the elementary symmetric polynomials in the \(k\) distinct \(\theta_\sigma\). Since the roots \(r_i\) are typically not known explicitly, we rely on the elementary symmetric sums of \(f(x) = x^n + a_{n-1}x^{n-1} + \cdots + a_1x + a_0\):
\[
\begin{split}
s_1 &	= r_1 + \cdots + r_n = -a_{n-1}, \\
s_2 &	= \sum_{i < j} r_i r_j = a_{n-2}, \\
	& \vdots \\
s_n &	= r_1 \cdots r_n = (-1)^n a_0
\end{split}
\]
and compute the power sums \(p_m = \sum_{\sigma \in G / H} \theta_\sigma^m\) in terms of the \(s_i\), using Newton’s identities to derive the \(e_j\). Newton's identities are recursive relations that allow computation of the elementary symmetric polynomials from power sums or vice versa. Specifically, the identities are given by
\[
p_k - e_1 p_{k-1} + e_2 p_{k-2} - \cdots + (-1)^{k-1} e_{k-1} p_1 + (-1)^k k e_k = 0
\]
for \(k = 1, \dots, n\), solving for \(e_k\) sequentially. This construction, rooted in computational algebraic number theory (e.g., \cite{cohen}), facilitates tasks such as determining Galois groups or factoring polynomials over extensions. Once the transitive subgroups of \(S_n\) are classified, identifying which corresponds to \(Gal(f)\) becomes the next step, detailed in the following sections.

\begin{thm}Let $m=[G: H]=deg\left(R_G(F, f)\right)$. Then, if $R_G(F, f)$ is squarefree, its Galois group (as a subgroup of $S_m$) is equal to $\phi(Gal(f))$, where $\phi$ is the natural group homomorphism from $G$ to $S_m$ given by the natural left action of $G$ on $G / H$.
\end{thm}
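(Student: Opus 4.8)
The plan is to place the splitting field of $R_G(F,f)$ inside $E_f$, transfer the inclusion through the Fundamental Theorem of Galois Theory, and compute by hand the permutation action of $\Gal(f)$ on the roots $\theta_\sigma$. Throughout I assume --- as the statement implicitly requires, since $\phi(\Gal(f))$ must make sense, and as is automatic when $G=S_n$ --- that for a suitable ordering of the roots $\Gal(f)$ is realised as a subgroup of $G\le S_n$; this is exactly the condition under which the root set $\{\theta_\sigma : \sigma H\in G/H\}$ is stable under $\Gal(f)$, hence under which $R_G(F,f)\in\Q[x]$. Each $\theta_\sigma = F(r_{\sigma(1)},\dots,r_{\sigma(n)})$ is a polynomial expression over $\Q$ in the roots $r_1,\dots,r_n$ of $f$, so $\theta_\sigma\in E_f$ and the splitting field $E_R$ of $R_G(F,f)$ satisfies $\Q\subseteq E_R\subseteq E_f$, with $E_R/\Q$ normal (a splitting field) and hence Galois over the characteristic-zero field $\Q$. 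Applying part (3) of the Fundamental Theorem to the tower $\Q\subseteq E_R\subseteq E_f$, the subgroup $N:=\Gal(E_f/E_R)$ is normal in $\Gal(f)$ and restriction gives a surjection $\pi:\Gal(f)\twoheadrightarrow\Gal(E_R/\Q)$ with kernel $N$.

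Next I compute the action on the roots. Take $H$ to be the stabiliser of $F$ for the $G$-action on $\Q[x_1,\dots,x_n]$ by index permutation. Then $\theta_\sigma$ depends only on the coset $\sigma H$: for $\tau\in H$ the identity $F(y_{\tau(1)},\dots,y_{\tau(n)})=F(y_1,\dots,y_n)$ holds in $\Q[y_1,\dots,y_n]$, and substituting $y_i=r_{\sigma(i)}$ yields $\theta_{\sigma\tau}=\theta_\sigma$; so $\sigma H\mapsto\theta_\sigma$ is a well-defined surjection from $G/H$ onto the root set of $R_G(F,f)$. Moreover, for $g\in\Gal(f)\le G$ we have $g(r_i)=r_{g(i)}$, and since $g$ fixes the coefficients of $F$,
\[
g(\theta_\sigma)=F\!\big(r_{g\sigma(1)},\dots,r_{g\sigma(n)}\big)=\theta_{g\sigma}.
\]
Read through the map $\sigma H\mapsto\theta_\sigma$, this says the action of $\Gal(f)$ on the roots of $R_G(F,f)$ is intertwined with the left-multiplication action of $\Gal(f)\le G$ on $G/H$, i.e.\ with the restriction of $\phi$.

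Now I invoke squarefreeness to finish. Since $R_G(F,f)$ is squarefree it has exactly $m=[G:H]$ distinct roots, so $S_m$ genuinely permutes them and the surjection $G/H\to\{\theta_\sigma\}$ above is a bijection. Let $\rho:\Gal(E_R/\Q)\hookrightarrow S_m$ be the permutation representation on these $m$ roots; it is injective because $E_R=\Q(\theta_\sigma:\sigma)$, and by definition $\Gal(R_G(F,f))=\rho(\Gal(E_R/\Q))=\mathrm{im}(\rho\circ\pi)$ since $\pi$ is onto. But the displayed identity, transported along the bijection $G/H\leftrightarrow\{\theta_\sigma\}$, says exactly $\rho\circ\pi=\phi|_{\Gal(f)}$. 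Therefore $\Gal(R_G(F,f))=\phi(\Gal(f))$ as subgroups of $S_m$; equivalently $N=\ker(\phi|_{\Gal(f)})$ and $\Gal(E_R/\Q)\cong\Gal(f)/\ker(\phi|_{\Gal(f)})\cong\phi(\Gal(f))$.

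The one genuinely load-bearing hypothesis is squarefreeness, and it is used twice: it makes $\Gal(R_G(F,f))$ a bona fide subgroup of $S_m$ (without it the natural target is $S_k$ for $k<m$ the number of distinct roots), and it upgrades the tautology ``$g$ fixes $E_R$ pointwise $\Rightarrow$ $g$ fixes every $\theta_\sigma$'' to the injectivity of $G/H\to\{\theta_\sigma\}$ that makes the $\Gal(f)$-action literally $\phi$ rather than a coarsening of it. The secondary hazard is purely bookkeeping: one must fix a single composition convention so that the key identity reads $g\cdot\theta_\sigma=\theta_{g\sigma}$ (and not $\theta_{\sigma g^{-1}}$, etc.), since getting this exactly right is what makes the resolvent's Galois group equal to $\phi(\Gal(f))$ on the nose and not merely abstractly isomorphic to it.
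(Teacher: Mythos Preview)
Your proof is correct and complete. The paper itself does not supply a proof of this theorem; it simply writes ``For the proof, see [So1]'' and moves on, so there is nothing to compare against at the level of argument. Your approach --- embed $E_R$ in $E_f$, invoke the Fundamental Theorem on the tower, compute $g(\theta_\sigma)=\theta_{g\sigma}$ directly, and use squarefreeness to make $G/H\to\{\theta_\sigma\}$ a bijection --- is the standard one and is essentially what one finds in Soicher's original paper and in Cohen's treatment. Your explicit flagging of the two roles of the squarefree hypothesis and of the composition-convention hazard is more careful than most textbook presentations.
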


In particular, the list of degrees of the irreducible factors of $R_G(F, f)$ in $\mathbb{Z}[x]$ is the same as the list of the lengths of the orbits of the action of $\phi(\Gal(f))$ on $\{1, \ldots, m\}$. For example, $R_G(F, f)$ has a root in $\mathbb{Z}$ if and only if $Gal(f)$ is conjugate under $G$ to a subgroup of $H$.
For the proof, see $[So1]$.

Consider a generic irreducible polynomial
\begin{equation}\label{eq:f}
 f(x) = \prod_{i=1}^n (x-x_i) = x^n + a_{n-1}x^{n-1} + \cdots + a_0 \in \Q[x]
\end{equation}
 where its roots $x_1, \ldots , x_n$ are considered variables.
Then $S_n$ acts on $\Q[x_1, \ldots, x_n] $ by permuting the variables.
 \begin{equation}
 \begin{split}
 S_n \times \Q[x_1, \ldots, x_n] & \to \Q[x_1, \ldots, x_n] \\
\left( \tau, F(x_1, \ldots, x_n) \right) & \to F( \tau(x_1) , \ldots, \tau(x_n) =: F^\tau
\end{split}
\end{equation}
For any \( G \subseteq S_n \) a polynomial $F(x_1, \ldots , x_n)$ is called \emph{symmetric under \( G \)} if $F=F^\tau$ for all $\tau \in G$.
Let $H$ denote the stabilizer of $F$ in $G$
\[
 H = \{ \tau \in G \mid F=F^\tau \} .
 \]
The \textbf{resolvent polynomial of $f(x)$ with respect to $F$} , denoted by \( \Res{G}{f}{F} \), is defined as
\[
\Res{G}{f}{F} = \prod_{\sigma \in G / H} \left( x -F^\sigma (x_1, \ldots , x_n) \right).
\]
 The product is over coset representatives of \( G / H \), and the degree of the resolvent is \( k = |G| / |H| \).
The resolvent’s factorization over \( \Q \) reveals information about the Galois group \( \Gal(f) \), as its irreducible factors correspond to the orbits of 
\( \Gal(f) \) acting on \( G / H \).

\begin{thm}
The factorization of \(\Res{G}{f}{F}\) over \(\Q\) into irreducible factors has degrees equal to the orbit lengths under the left action of \(\Gal(f)\) on the right cosets \(G / H\). The number of factors is the number of double cosets \(\Gal(f) \backslash G / H\).
\end{thm}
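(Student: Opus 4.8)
The plan is to reduce the statement to the standard dictionary between irreducible factors over $\Q$ and orbits of the Galois group on the roots, once the action of $\Gal(f)$ on the roots of $\Res{G}{f}{F}$ has been pinned down. As in the preceding theorem we assume $\Res{G}{f}{F}$ is squarefree, and — the usual running hypothesis for the resolvent method — that the roots $\alpha_1,\dots,\alpha_n$ of $f$ have been ordered so that $\Gal(f)\subseteq G$ as subgroups of $S_n$ (automatic when $G=S_n$); without this the resolvent need not even have rational coefficients.

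First I would set up notation: let $K=\Gal(f)=\Gal(E_f/\Q)$, let $\sigma_1H,\dots,\sigma_mH$ enumerate the cosets in $G/H$ with $m=[G:H]$, and put $\theta_i=F^{\sigma_i}(\alpha_1,\dots,\alpha_n)\in E_f$, so that $\Res{G}{f}{F}(x)=\prod_{i=1}^m(x-\theta_i)$, with the $\theta_i$ pairwise distinct by squarefreeness. The crucial bookkeeping step is the compatibility identity $g\big(F^{\sigma}(\alpha_1,\dots,\alpha_n)\big)=F^{g\sigma}(\alpha_1,\dots,\alpha_n)$ for $g\in K$ and $\sigma\in G$: indeed $g$ acts on $E_f$ by permuting $\alpha_1,\dots,\alpha_n$ through its image in $G$, so applying $g$ replaces each $\alpha_k$ by $\alpha_{g(k)}$; moreover $F^{g\sigma h}=F^{g\sigma}$ for all $h\in H$ since $H$ is the stabilizer of $F$ in $G$, so $F^{g\sigma}$ depends only on the coset $g\sigma H$. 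Hence $K$ permutes $\{\theta_1,\dots,\theta_m\}$, and under the identification $\theta_i\leftrightarrow\sigma_iH$ this permutation action is precisely the left-multiplication action of $K$ on $G/H$. In particular the coefficients of $\Res{G}{f}{F}$ are $K$-fixed elements of the Galois extension $E_f/\Q$, hence lie in $\Q$; this also recovers, via the preceding theorem, the identification of $\Gal(\Res{G}{f}{F})$ with the image of $\Gal(f)$ in $S_m$.

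Next I would invoke the orbit--factor correspondence. Since each $\theta_i\in E_f$ and $E_f/\Q$ is Galois with group $K$, for every $K$-orbit $O\subseteq\{\theta_1,\dots,\theta_m\}$ the monic polynomial $p_O(x)=\prod_{\theta\in O}(x-\theta)$ has $K$-invariant coefficients, so $p_O\in\Q[x]$; and $p_O$ is irreducible over $\Q$ because any monic divisor of $p_O$ in $\Q[x]$ has a root set stable under $K$, while $K$ is transitive on $O$, forcing that divisor to be $1$ or $p_O$. Therefore $\Res{G}{f}{F}=\prod_O p_O$ is exactly the factorization into $\Q$-irreducible factors, the degrees $\deg p_O=|O|$ are the orbit lengths of $K$ on $\{\theta_i\}$, and by the previous paragraph these equal the orbit lengths of $\Gal(f)$ acting by left multiplication on $G/H$.

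Finally, for the count: the number of $\Q$-irreducible factors equals the number of $K$-orbits on $G/H$, and the $K$-orbit of $\sigma H$ is $\{k\sigma H:k\in K\}$, whose union inside $G$ is the double coset $K\sigma H$; thus two cosets lie in the same orbit iff they lie in the same double coset, so the $K$-orbits on $G/H$ are in bijection with $\Gal(f)\backslash G/H$, yielding the stated number of factors. I expect the only genuinely delicate point to be the compatibility identity together with its well-definedness on cosets; everything after that is the textbook correspondence between Galois orbits and irreducible factors, and care is needed only to keep track of the squarefreeness and $\Gal(f)\subseteq G$ hypotheses, without which factor degrees and orbit lengths need not agree.
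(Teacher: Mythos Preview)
Your argument is correct and is the standard route: identify the action of $K=\Gal(f)$ on the roots $\theta_i$ of the resolvent with left multiplication on $G/H$ via the compatibility $g\cdot F^{\sigma}(\alpha)=F^{g\sigma}(\alpha)$, then invoke the orbit--irreducible factor correspondence for elements of a Galois extension, and finally read off the double-coset count. You are also right to flag the hypotheses $\Gal(f)\subseteq G$ and squarefreeness; without the first the coefficients need not be rational, and without the second distinct cosets can collapse to the same $\theta$.

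The paper, however, does not actually prove this theorem: it is stated and then the text moves directly into the computational recipe for building the resolvent. The closely related preceding theorem (identifying the Galois group of the resolvent with $\phi(\Gal(f))$) is likewise not proved in the paper but deferred to an external reference. So there is no ``paper's proof'' to compare against; your write-up supplies precisely the argument the paper omits, and does so cleanly.
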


Let us now go through the steps of how one would compute the resolvent when given $G$, $F$ and a generic irreducible polynomial $f(x)$ given in 
Let $H$ be the stabilizer of \( F \). Then $\leq G$. Denote by \( k = |G| / |H| \) the index of $H $ in $G$.\\
%
 
\subsubsection{Express Roots Symbolically}:
 The roots \( \theta_\sigma = F(x_{\sigma(1)}, \ldots, x_{\sigma(n)}) \) are functions of the roots \( x_i \). Use Vieta’s formulas for the elementary symmetric sums of \( f(x) \):
\[
\begin{split}
    s_1 & = x_1 + \cdots + x_n = -a_{n-1}, \\
    s_2 & = \sum_{i < j} x_i x_j = a_{n-2}, \\
     \vdots & \\
    s_n & = x_1 \cdots x_n = (-1)^n a_0.
\end{split}
\]

\subsubsection{Compute Power Sums}: Define the power sums
    \[
    p_m = \sum_{\sigma \in G / H} \theta_\sigma^m = \sum_{\sigma} [F(x_{\sigma(1)}, \ldots, x_{\sigma(n)})]^m.
    \]
    Expand \( F(x_{\sigma(1)}, \ldots, x_{\sigma(n)})^m \), sum over coset representatives, and express the result in terms of \( s_1, \ldots, s_n \) using symmetric polynomial identities. For example, if \(F\) is a linear form, \(p_m\) can be expressed using binomial expansions and symmetric sums; for higher-degree \(F\), use multivariate generating functions or character-theoretic projections to compute the sums efficiently.

\subsubsection{Apply Newton’s Identities}: Relate \( p_m \) to \( e_j \) via Newton’s identities:
    \[
    \begin{split}
    e_1 & = p_1,\\
    e_2 & = \frac 1 2 (e_1 p_1 - p_2), \\\
    e_3 & = \frac 1 3 \left(e_2 p_1 - e_1 p_2 + p_3\right), \\
       \vdots & \\
    e_j & =\frac 1 j \left( \sum_{i=1}^{j-1} (-1)^{i-1} e_{j-i} p_i + (-1)^{j-1} p_j \right).
    \end{split}
    \]
Solve recursively to obtain \( e_1, \ldots, e_k \). This recursion is stable for symbolic computation, allowing exact rational coefficients when starting from rational \(a_i\).

\subsubsection{Construct the Resolvent}: Form the polynomial using the computed \( e_j \).
With \( e_1, e_2, \ldots, e_k \) computed, the resolvent is
\[
\Res{G}{f}{F} = x^k - e_1 x^{k-1} + e_2 x^{k-2} - \cdots + (-1)^k e_k.
\]
This polynomial has degree \( k \), and its coefficients are fully symbolic in the coefficients of \( f(x) \). Symbolic computation is exact but computationally intensive for high-degree resolvents, especially for septic polynomials with large \( k \). For instance, the 120-ic resolvent for septics has coefficients of enormous degree in the \(a_i\), requiring computer algebra systems like Sage or Magma for practical evaluation.

\section{Resolvents of septics}\label{sec:galois_septics}

Consider the irreducible polynomial $f \in \mathbb{Q}[x]$ in \eqref{eq:f-septic}. 
Let its roots $\alpha_1, \alpha_2, \dots, \alpha_7$ lie in its splitting field.
 The goal in Galois theory is to determine the Galois group \(\Gal(f)\subseteq S_7\), as this group provides information about the solvability of the polynomial and the structure of its splitting field. To achieve this characterization, we rely on the construction and factorization of \textit{resolvent polynomials}, which are specialized polynomials whose roots are certain algebraic expressions involving the original roots \(\alpha_i\).

Resolvent polynomials are formed by considering subgroups \(H\subseteq S_7\) and functions \(F(\alpha_1,\dots,\alpha_7)\) invariant under the subgroup \(H\). The resolvent associated with \(H\) is then constructed by taking the product of conjugates of this invariant under the cosets of \(H\) in \(S_7\):
\[
R_H(x) = \prod_{\sigma \in S_7/H}(x - F^\sigma),
\]
where \(F^\sigma\) denotes the action of permutation \(\sigma\) on \(F\). The degree of \(R_H(x)\) is given by the index \([S_7:H]\), which guides us towards a deeper understanding of the Galois group structure by analyzing its factorization pattern over the rational field.

The simplest resolvent polynomial for septic equations, known as the quadratic resolvent, employs the discriminant \(\Delta = \prod_{i<j}(\alpha_i - \alpha_j)^2\). This invariant reflects whether permutations in the Galois group are even or odd. The quadratic resolvent is then expressed as:
\[
R_1(x) = x^2 - \Delta.
\]
Its factorization pattern directly tests the inclusion of \(\Gal(f)\) within the alternating group \(A_7\). Explicitly, if the quadratic resolvent splits into linear factors over \(\Q\), the Galois group must be a subgroup of \(A_7\). Conversely, irreducibility of \(R_1(x)\) indicates the presence of odd permutations, confirming that \(\Gal(f)\) is exactly \(S_7\).

To distinguish more intricate subgroups, we examine the 30-ic resolvent constructed from an invariant associated with the subgroup \(\PSL(3,2)\cong L(3,2)\). This invariant is explicitly given by the polynomial expression:
\[
F_2 = \alpha_3\alpha_1\alpha_4 + \alpha_4\alpha_2\alpha_5 + \alpha_5\alpha_3\alpha_6 + \alpha_6\alpha_4\alpha_7 + \alpha_7\alpha_5\alpha_1 + \alpha_1\alpha_6\alpha_2 + \alpha_2\alpha_7\alpha_3,
\]
which remains fixed under the action of \(\PSL(3,2)\). The 30-ic resolvent polynomial is then:
\[
R_2(x) = \prod_{\sigma \in S_7/\PSL(3,2)}(x - F_2^\sigma).
\]
This resolvent discriminates the simple subgroup \(L(3,2)\) by factorization: complete irreducibility corresponds to the full symmetric group, whereas certain prescribed factorizations such as degrees \(1,7,8,14\) uniquely signal \(\Gal(f)\subseteq L(3,2)\).

For deeper subgroup identification, particularly for the metacyclic group \(F_{42} = C_7\rtimes C_6\), the 120-ic resolvent plays an essential role. Defined through a carefully chosen invariant polynomial:
\[
\begin{aligned}
F_3 = &\alpha_3\alpha_1(\alpha_4 + \alpha_7) + \alpha_2\alpha_5(\alpha_4 + \alpha_3) + \alpha_5\alpha_6(\alpha_3 + \alpha_7) \\
&+\alpha_4\alpha_6(\alpha_7 + \alpha_3) + \alpha_5\alpha_1(\alpha_7 + \alpha_6) + \alpha_1\alpha_2(\alpha_6 + \alpha_4) + \alpha_2\alpha_7(\alpha_3 + \alpha_6),
\end{aligned}
\]
the resulting 120-ic resolvent is constructed as:
\[
R_3(x) = \prod_{\sigma \in S_7/F_{42}}(x - F_3^\sigma).
\]
The explicit factorization patterns of this polynomial over \(\Q\), such as degrees \(1,7,14,21,42\), conclusively determine whether \(\Gal(f)\subseteq F_{42}\). Membership in this subgroup is particularly significant, as it implies solvability by radicals, connecting resolvent computations directly to classical algebraic solvability criteria.

Additionally, resolvent polynomials formed by summation over distinct subsets of roots provide another effective method of subgroup detection. For instance, the 35-ic resolvent polynomial defined by summation of triplets of roots:
\[
R(x) = \prod_{1\leq i<j<k\leq 7}(x - (\alpha_i+\alpha_j+\alpha_k)),
\]
with stabilizer subgroup \(S_3\times S_4\), offers precise patterns of factorization that distinctly correspond to subgroups such as the dihedral group \(D_7\), the Frobenius group \(F_{21}\), and others, greatly enriching the classification framework.

Resolvent polynomials provide a systematic way to determine \(G\) by studying the factorization patterns over \(\Q\) of auxiliary polynomials constructed from the roots \(\alpha_i\). These methods trace back to Lagrange and Jordan and were refined for septic polynomials by Foulkes. For a subgroup \(H \leq S_7\) and a polynomial \(F(x_1, \dots, x_7) \in \Q[x_1, \dots, x_7]\) with stabilizer \(\Stab_{S_7}(F) = H\), the associated resolvent is
\[
R_F(f)(y) = \prod_{\tau \in H \backslash S_7} \bigl( y - F(\alpha_{\tau(1)}, \dots, \alpha_{\tau(7)}) \bigr) \in \Q[y],
\]
a monic polynomial of degree \([S_7 : H] = 5040 / |H|\). The irreducible factor degrees of \(R_F(f)(y)\) correspond to orbit lengths under the left action of \(G\) on the right cosets \(H \backslash S_7\), and the number of irreducible factors equals the number of double cosets \(G \backslash S_7 / H\) \cite{book}.

\begin{enumerate}[label=\upshape(\roman*)]
\item If \(G = S_7\), then \(R_F(f)(y)\) is irreducible.
\item If \(G \leq H^\sigma\) for some conjugate \(H^\sigma\) of \(H\), then \(R_F(f)(y)\) splits completely into linear factors over \(\Q\).
\item For a maximal proper subgroup \(H < G' \leq S_7\) with \(G \leq G'\), if \(R_F(f)(y)\) has a rational root, then \(G \leq H\); otherwise \(G = G'\).
\end{enumerate}

Foulkes  constructed three key resolvents for septics: a quadratic resolvent for \(A_7\), a degree-30 resolvent for \(\PSL(3,2)\), and a degree-120 resolvent for \(F_{42}\). Their factorizations distinguish all transitive subgroups of \(S_7\).

\subsection{The Quadratic Resolvent}
This resolvent detects whether \(G \subseteq A_7\):

\begin{enumerate}[label=\upshape(\roman*)]
\item Let \(H = A_7\), and let \(\Delta = \prod_{i < j} (\alpha_i - \alpha_j)^2\) be the discriminant of \(f\).
\item Define \(F_1 = \sqrt{\Delta}\), which changes sign under odd permutations.
\end{enumerate}
The corresponding resolvent is
\[
R_1(x) = (x - \sqrt{\Delta})(x + \sqrt{\Delta}) = x^2 - \Delta,
\]
of degree \([S_7 : A_7] = 2\). Thus:

\begin{enumerate}[label=\upshape(\roman*)]
\item If \(G \subseteq A_7\), then \(\sqrt{\Delta} \in \Q\) and \(R_1(x)\) splits into linear factors.
\item If \(G = S_7\), then \(R_1(x)\) is irreducible.
\end{enumerate}

The discriminant \(\Delta\) is computed as \(\mathrm{Res}(f, f')\) \cite{book}.

\subsection{The 30-ic Resolvent for \(\PSL(3,2)\)}
This resolvent tests for inclusion in \(\PSL(3,2)\), a maximal subgroup of \(S_7\):

\begin{enumerate}[label=\upshape(\roman*)]
\item Let \(H = \PSL(3,2)\), and define
\[
F_2 = \alpha_3 \alpha_1 \alpha_4 + \alpha_4 \alpha_2 \alpha_5 + \alpha_5 \alpha_3 \alpha_6 + \alpha_6 \alpha_4 \alpha_7 + \alpha_7 \alpha_5 \alpha_1 + \alpha_1 \alpha_6 \alpha_2 + \alpha_2 \alpha_7 \alpha_3,
\]
which is invariant under \(H\).
\end{enumerate}
Then
\[
R_2(x) = \prod_{\sigma \in S_7 / \PSL(3,2)} (x - F_2^\sigma),
\]
of degree \([S_7 : \PSL(3,2)] = 30\). Its factorization encodes:

\begin{enumerate}[label=\upshape(\roman*)]
\item \(G = S_7\): \(R_2(x)\) irreducible;
\item \(G = A_7\): \(R_2(x)\) factors as \(15,15\);
\item \(G = \PSL(3,2)\): factors \(1,7,8,14\).
\end{enumerate}

\subsection{The 120-ic Resolvent for \(C_7 \rtimes C_6\)}
This resolvent targets the metacyclic group \( C_7 \rtimes C_6\):

\begin{enumerate}[label=\upshape(\roman*)]
\item Let \(H = F_{42}\), and define
\[
\begin{aligned}
F_3 =\ & \alpha_3 \alpha_1 (\alpha_4 + \alpha_7) + \alpha_2 \alpha_5 (\alpha_4 + \alpha_3) + \alpha_5 \alpha_6 (\alpha_3 + \alpha_7) \\
      &+ \alpha_4 \alpha_6 (\alpha_7 + \alpha_3) + \alpha_5 \alpha_1 (\alpha_7 + \alpha_6) + \alpha_1 \alpha_2 (\alpha_6 + \alpha_4) + \alpha_2 \alpha_7 (\alpha_3 + \alpha_6),
\end{aligned}
\]
which is invariant under \(H\).
\end{enumerate}
The resolvent is
\[
R_3(x) = \prod_{\sigma \in S_7 / C_7 \rtimes C_6} (x - F_3^\sigma),
\]
of degree \([S_7 : C_7 \rtimes C_6] = 120\). If \(G \subseteq C_7 \rtimes C_6\), then \(f\) is solvable by radicals, and the factor degrees include \(1,7,14,21,21,42\).

Berwick’s invariant cubic \(z^3 - \phi' z^2 + A z - \Delta = 0\) (where \(\phi'\) involves square-root differences) refines this classification under field extensions \cite{Sha-2}.


\begin{table}[htbp]
\caption{Degrees of Irreducible Factors of Resolvents for Septic Galois Groups}
\label{tab:foulkes_resolvents}
\centering
\begin{tabular}{|c|c|c|c|}
\hline
Galois Group & \(R_1(x)\) (Quadratic) & \(R_2(x)\) (30-ic) & \(R_3(x)\) (120-ic) \\
\hline
\(S_7\) & 2 & 30 & 120 \\
\(A_7\) & 1,1 & 15,15 & 120 \\
\(\PSL(3,2)\) & 1,1 & 1,7,8,14 & 8,56,56 \\
\(C_7 \rtimes C_6\) & 2 & 2,14,14 & 1,7,14,21,21,42 \\
\(C_7 \rtimes C_3\)& 1,1 & 1,7,7,7,7 & 1,7,7,7,7,21,21,21,21 \\
\(D_7\) & 2 & 2,14,14 & 1, \( 7\times7,5\times14 \) \\
\(C_7\) & 1,1 & 1,7,7,7,7 & 1, \(17\times7 \)  \\
\hline
\end{tabular}
\end{table}
\subsection{Numerical Computation}

For high-degree resolvents or complex \( G \), numerical methods are more practical, as described by Cohen \cite{cohen}. This approach approximates the roots of \( f(x) \), computes \( \theta_\sigma \), and constructs the resolvent by rounding coefficients to integers, leveraging that \( \Res{G}{f}{F} \in \mathbb{Z}[x] \) when \( f(x) \in \mathbb{Z}[x] \) and \( F \) has integer coefficients. The algorithm is:

\subsubsection{Approximate Roots}: Compute the roots \( x_1, \ldots, x_n \in \mathbb{C} \) of \( f(x) \) to high precision (e.g., 50–100 decimal places) using a root-finding algorithm like Newton-Raphson or Laguerre’s method. Newton-Raphson iterates 
\[
x_{k+1} = x_k - f(x_k)/f'(x_k),
\]
 converging quadratically for simple roots, while Laguerre's method is more robust for polynomials, using cubic convergence via 
\[
x_{k+1} = x_k - n f(x_k) / (f'(x_k) \pm \sqrt{(n-1)^2 f'(x_k)^2 - n (n-1) f(x_k) f''(x_k)}).
\]

\subsubsection{Evaluate Resolvent Roots}: For each \( \sigma \in G / H \), compute 
\[
\theta_\sigma = F(x_{\sigma(1)}, \ldots, x_{\sigma(n)}).
\]
 The set \( \{ \theta_\sigma \} \) has \( k \) distinct values, assuming no accidental coincidences in the approximations.

\subsubsection{Compute Coefficients}: Form the resolvent
    \[
    \Res{G}{f}{F} = \prod_{\sigma \in G / H} (x - \theta_\sigma) = x^k - e_1 x^{k-1} + \cdots + (-1)^k e_k.
    \]
    Calculate \( e_j \) using power sums \( p_m = \sum_{\sigma} \theta_\sigma^m \) and Newton’s identities, as in the symbolic method. Power sums are computed by summing the high-precision complex numbers, and Newton's recursion is applied numerically.

\subsubsection{Round Coefficients}: Round each \( e_j \) to the nearest integer, ensuring accuracy with high-precision root approximations. To avoid errors, use interval arithmetic or increased precision (e.g., 200 digits) to confirm the integer values lie within error bounds less than 0.5.

\subsubsection{Verify}: Check the polynomial by evaluating at points (e.g., \( x = 0, 1 \)) or recomputing its roots, comparing to the \(\theta_\sigma\), or factoring the rounded polynomial symbolically and checking consistency with group-theoretic expectations.

Numerical methods are efficient but require careful precision management to avoid rounding errors \cite{cohen}. For example, in septic cases, approximating roots to 100 digits ensures resolvent coefficients are correctly rounded for degrees up to 120, as the condition number of the companion matrix is bounded for polynomials with bounded heights \cite{Sha-3}.


The Galois group \(\Gal(f)\) is determined uniquely by the factorization patterns of the resolvents \(T(x) = x^2 - \Delta\), \(\Psi(x) = R_2(x)\), and \(\Phi(x) = R_3(x)\).

\begin{thm}\label{thm:galois_resolvents}
The Galois group \(\Gal(f)\) is uniquely determined by the degrees of the irreducible factors of \(T(x)\), \(\Psi(x)\), and \(\Phi(x)\) as listed in \cref{tab:foulkes_resolvents}. Specifically:
\begin{enumerate}
\item \(S_7\) if \(T(x)\) and \(\Psi(x)\) are both irreducible.
\item \(A_7\) if \(T(x)\) factors as \(1,1\) and \(\Psi(x)\) as \(15,15\).
\item \(\PSL(3,2)\) if \(T(x)\) has \(1,1\) and \(\Psi(x)\) has \(1,7,8,14\).
\item \(C_7 \rtimes C_6\)if \(T(x)\) is irreducible, \(\Psi(x)\) has \(2,14,14\), and \(\Phi(x)\) has \(1,7,14,21,21,42\).
\item \(C_7 \rtimes C_3\) if \(T(x)\) has \(1,1\), \(\Psi(x)\) has \(1,7,7,7,7\), and \(\Phi(x)\) has \(1,7,7,7,7,21,21,21,21\).
\item \(C_7\) if \(T(x)\) has \(1,1\), \(\Psi(x)\) has \(1,7,7,7,7\), and \(\Phi(x)\) has \(1,17\times7\).
\item \(D_7\) if \(T(x)\) is irreducible, \(\Psi(x)\) has \(2,14,14\), and \(\Phi(x)\) has \(1,7\times7,5\times14\).
\end{enumerate}
\end{thm}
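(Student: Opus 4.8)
The plan is to reduce the statement to a finite group-theoretic verification, using the resolvent correspondence already established (the theorem on the factorization of $\Res{G}{f}{F}$). For each of the seven transitive subgroups $G\le S_7$ of \cref{tab:transitive_subgroups_S7}, and for each of the three test subgroups $H\in\{A_7,\ \PSL(3,2),\ F_{42}\}$ with $F_{42}=C_7\rtimes C_6$, I would compute the multiset of orbit lengths of the left $G$-action on the right cosets $H\backslash S_7$. By the cited theorem, when the associated resolvent ($T$, $\Psi$, $\Phi$, respectively) is squarefree this multiset is exactly the multiset of degrees of its irreducible factors over $\Q$. The proof then splits into two halves: (i) check that the seven resulting triples of factorization types coincide with the seven rows of \cref{tab:foulkes_resolvents}; and (ii) observe that these seven triples are pairwise distinct, so the factorization data recovers $G$ up to conjugacy in $S_7$ — which is the most any resolvent-based invariant can determine, and is precisely the assertion of the theorem.

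Before the group computation I would dispatch the separability hypothesis. A priori, evaluating a resolvent at the actual roots $\alpha_i$ of a given $f$ can produce repeated factors even when the generic resolvent is separable. The standard remedy is a Tschirnhausen transformation: replace $f$ by a polynomial $g$ with the same splitting field, hence the same Galois group, for which $T,\Psi,\Phi$ are all squarefree; such $g$ exists because the bad locus is a proper Zariski-closed condition on the transformation parameters. So I would note that we may assume without loss of generality that $T$, $\Psi$, $\Phi$ are squarefree, after which factor degrees are literally $G$-orbit lengths.

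For the computations: the quadratic case is immediate — $G$ has a single orbit of length $2$ on $S_7/A_7$ if $G\not\le A_7$ and two fixed points if $G\le A_7$, i.e.\ $T$ splits iff $\sqrt{\Delta}\in\Q$ iff $G\le A_7$, sorting the seven groups into $\{S_7,F_{42},D_7\}$ versus $\{A_7,\PSL(3,2),C_7\rtimes C_3,C_7\}$. For $\Psi$ (degree $30$) and $\Phi$ (degree $120$) I would compute the double-coset spaces $G\backslash S_7/H$ using a handful of structural facts: $\PSL(3,2)$ is simple, hence $\PSL(3,2)\le A_7$, while $F_{42}\not\le A_7$ since its point stabilizer $C_6$ acts on the remaining six points as a $6$-cycle (an odd permutation), so $F_{42}\cap A_7=C_7\rtimes C_3$; $A_7\trianglelefteq S_7$, which trivializes the $A_7$-double cosets (a parity argument then gives $\Psi=15,15$ and $\Phi$ irreducible for $G=A_7$); and $N_{S_7}(C_7)=F_{42}$ has order $42$, so a cyclic $G=C_7$ fixes a coset $\sigma H$ iff $\sigma^{-1}C_7\sigma$ is the Sylow $7$-subgroup of $H$ and otherwise lies in an orbit of length $7$, which immediately yields the $1,\,7\times7,\,7\times7,\dots$ and $1,\,7\times17$ patterns. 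Running this group-by-group over the inclusion lattice $C_7\subset D_7\subset C_7\rtimes C_3\subset \PSL(3,2)\subset A_7$ and $C_7\rtimes C_3\subset F_{42}$ recorded in \cref{sec:septic polynomials} reproduces each row of \cref{tab:foulkes_resolvents}. Finally I would tabulate the seven triples and verify pairwise distinctness: $\Psi$ alone isolates $S_7$ (irreducible $30$), $A_7$ ($15,15$), and $\PSL(3,2)$ ($1,7,8,14$); the only residual ties are $F_{42}$ versus $D_7$ (both with $T$ irreducible and $\Psi=2,14,14$) and $C_7\rtimes C_3$ versus $C_7$ (both with $T=1,1$ and $\Psi=1,7,7,7,7$), and each is broken by $\Phi$ — $1,7,14,21,21,42$ versus $1,\,7\times7,\,14\times5$ in the first case, $1,7,7,7,7,21,21,21,21$ versus $1,\,7\times17$ in the second.

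The hard part will be the orbit computations on the $30$- and $120$-element coset spaces for $H=\PSL(3,2)$ and $H=F_{42}$: specifically, verifying the self-action split $1,7,8,14$ of $\PSL(3,2)$ on $S_7/\PSL(3,2)$ (which needs the permutation character of $\PSL(3,2)$ on these $30$ cosets, equivalently its fusion in $S_7$), confirming that $A_7$ breaks the $30$ cosets into two orbits of size $15$ rather than acting transitively (reflecting that the single $S_7$-class of $\PSL(3,2)$ splits into two $A_7$-classes, fused by the duality outer automorphism of the Fano plane), and nailing the fine split $1,\,7\times7,\,14\times5$ against $1,7,14,21,21,42$ that separates $D_7$ from $F_{42}$ on the $120$ cosets. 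These are all finite checks — most cleanly carried out, and independently confirmed, in GAP — and the rest is bookkeeping with the subgroup lattice, Sylow normalizers, and parity.
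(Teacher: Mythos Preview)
Your proposal is correct and follows the same strategy as the paper's proof --- reduce the factor degrees of $T,\Psi,\Phi$ to orbit lengths of $G$ on the coset spaces $S_7/H$ for $H\in\{A_7,\PSL(3,2),F_{42}\}$, then check that the seven resulting triples are pairwise distinct --- though you supply considerably more detail than the paper, whose proof merely states the orbit--factor correspondence and defers the actual verification to references. One small slip to fix: you reproduce the chain $D_7\subset C_7\rtimes C_3$ from the paper's text, but $14\nmid 21$ so this inclusion is impossible; the correct picture has $D_7$ and $C_7\rtimes C_3$ both contained in $F_{42}$ but incomparable to each other --- harmless for your argument, since none of your orbit computations actually use that inclusion.
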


\begin{proof}
The roots of each resolvent form orbits under \(G \subseteq S_7\) acting on cosets \(S_7 / H\). The stabilizers (\(A_7\), \(\PSL(3,2)\), \(C_7 \rtimes C_6\)) ensure distinct factorization patterns, as shown in \cite{Sha-1,Sha-3}. A linear factor indicates \(G\) lies in a conjugate of the stabilizer subgroup.
\end{proof}

These factorization patterns are later used as features in the machine learning classification of septic Galois groups.

\subsection{A 35-ic Resolvent via Triplet Sums}

Soicher and McKay \cite{soicher} constructed a linear resolvent using 3-set sums of roots. Fix \(F = \alpha_1 + \alpha_2 + \alpha_3\); its stabilizer is \(H = S_3 \times S_4\) (\(|H| = 144\)), giving
\[
R(x) = \prod_{1 \leq i < j < k \leq 7} \bigl( x - (\alpha_i + \alpha_j + \alpha_k) \bigr),
\]
of degree \([S_7 : H] = 35 = \binom{7}{3}\). A Tschirnhausen transformation ensures distinct roots if needed.

To compute \(R(x)\), use power sums \(p_m = \sum_{i<j<k} (\alpha_i + \alpha_j + \alpha_k)^m\) in terms of elementary symmetric polynomials \(s_r = (-1)^r a_{7-r}\) (\(s_1 = -a_6\), etc.) via Newton identities:
\[
e_1 = p_1, \quad k e_k = \sum_{i=1}^k (-1)^{i-1} e_{k-i} p_i, \quad R(x) = x^{35} - e_1 x^{34} + \cdots + (-1)^{35} e_{35}.
\]
Explicitly,
\begin{align*}
p_1 &= 15 s_1 = -15 a_6, \\
p_2 &= 15(s_1^2 - 2 s_2) + 10 s_2 = 15 a_6^2 - 20 a_5, \\
p_3 &= -15 a_6^3 + 89 a_4, \\
&\vdots \\
p_{35} &= -35 a_0^5 + \cdots,
\end{align*}
where higher \(p_m\) follow from symbolic symmetric expansions. These can be computed symbolically or numerically for factorization.

\begin{table}[htbp]
\caption{Orbit-Length Partitions of 3-Sets under Transitive Subgroups of \(S_7\)}
\label{tab:orbit_lengths}
\centering
\begin{tabular}{|c|c|}
\hline
\(G\) & Orbit Lengths \\
\hline
\(C_7\) & \(7^5\) \\
\(D_7\) & \(7^3, 14\) \\
\(C_7 \rtimes C_3\)& \(7^2, 21\) \\
\(C_7 \rtimes C_6\)& \(21\) \\
\(\PSL(3,2)\) & \(7, 28\) \\
\(A_7\) & \(35\) \\
\(S_7\) & \(35\) \\
\hline
\end{tabular}
\end{table}


The 35-ic resolvent \(R(x)\) can determine \(G\) via factorization degrees, with auxiliary discriminant tests for ambiguous cases \cite{Sha-3}.  
For coefficients of the degree 35 resolvent see \cref{app:a}. As far as we are aware, this is the first time that this resolvent has been computed.

\begin{thm}\label{thm:3sets_resolvent}
Let \(f \in \Q[x]\) be monic, irreducible, and of degree \(7\). Then the degrees of the irreducible factors of the 35-ic resolvent \(R(x)\) determine \(\Gal(f)\) as follows:
\begin{enumerate}
\item \(7,7,7,7,7:\ G = C_7\).
\item \(7,7,7,14:\ G = D_7\).
\item \(7,7,21:\ G = C_7 \rtimes C_3\).
\item Single factor of degree \(21\):

  \begin{enumerate}[label=\upshape(\roman*)]
  \item If \(g_d(x)\) (degree \(42\), with roots \(b_k \pm \sqrt{d}\), where \(d = \mathrm{disc}(f)/\square\)) is reducible, then \(G = \PSL(3,2)\);
  \item Otherwise \(G = F_{42}\).
  \end{enumerate}
\item Irreducible (\(35\)): if \(\mathrm{disc}(f)\) is a square, \(G = A_7\); otherwise \(G = S_7\).
\end{enumerate}
\end{thm}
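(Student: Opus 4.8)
The plan is to convert the statement into a question about orbit lengths and then invoke the orbit--coset theorem recorded above. The stabiliser in $S_7$ of a $3$-element subset is $H=S_3\times S_4$, of index $35=\binom{7}{3}$, and the coset space $H\backslash S_7$ is $G$-equivariantly identified (for any $G\le S_7$) with the collection of $35$ triples $\{i,j,k\}\subseteq\{1,\dots,7\}$. Provided $R(x)$ is squarefree, the orbit--coset theorem says that the multiset of degrees of the irreducible factors of $R(x)$ over $\Q$ equals the multiset of orbit lengths of $G=\Gal(f)$ acting on those $35$ triples. Squarefreeness of $R(x)$ is equivalent to the $35$ sums $\alpha_i+\alpha_j+\alpha_k$ being pairwise distinct; this is not automatic, but since $f$ is separable it can be forced by passing to a Tschirnhausen transform $\tilde f$, which has the same splitting field and hence the same Galois group. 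I would do this once at the outset so that the orbit dictionary applies verbatim.

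The substance of the proof is the computation of the orbit partition of each of the seven transitive subgroups of $S_7$ on $3$-sets, i.e.\ the justification of \cref{tab:orbit_lengths}. For $G\in\{C_7,D_7,C_7\rtimes C_3,C_7\rtimes C_6\}$ I would exploit the normal subgroup $C_7$: acting as a $7$-cycle it has no fixed points, so it stabilises no $3$-set, and all of its orbits on triples have length $7$; there are therefore five of them, indexed by the five cyclic ``gap patterns'' $(1,1,5),(1,3,3),(2,2,3),(1,2,4),(1,4,2)$ of a $3$-subset of $\mathbb{Z}/7$, the first three reflection-symmetric and the last two a mirror pair. The quotient $G/C_7\in\{1,C_2,C_3,C_6\}$ acts on these five classes through the multiplier group: $x\mapsto -x$ fixes the three symmetric classes and swaps the mirror pair, while $x\mapsto 2x$ cyclically permutes the three symmetric classes and fixes each mirror class. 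Reading off orbit lengths and multiplying by $7$ gives $7^5$ for $C_7$, $7^3,14$ for $D_7$, $7^2,21$ for $C_7\rtimes C_3$, and $14,21$ for $C_7\rtimes C_6$. For $G=\PSL(3,2)$ I would identify $\{1,\dots,7\}$ with the points of the Fano plane: it acts transitively on the $7$ lines and (e.g.\ via its sharp transitivity on ordered frames) transitively on the $28$ non-collinear ``triangle'' triples, giving $7,28$. Finally $A_7$ and $S_7$ are $3$-transitive, hence transitive on $3$-sets, giving $35$.

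Cases (1), (2), (3) and (5) are now immediate: the partitions $7^5$, $7^3,14$, $7^2,21$ occur for no transitive subgroup other than $C_7$, $D_7$, $C_7\rtimes C_3$; and if $R(x)$ is irreducible then $G\in\{A_7,S_7\}$, settled by the discriminant, since $\sqrt{\mathrm{disc}(f)}=\prod_{i<j}(\alpha_i-\alpha_j)$ lies in $\Q$ — equivalently $\mathrm{disc}(f)$ is a square — precisely when $G\subseteq A_7$. For (4) the remaining possibilities are $C_7\rtimes C_6$ and $\PSL(3,2)$; although the orbit partitions $14,21$ and $7,28$ are already different, a test insensitive to accidental coincidences among triple sums and not requiring one to reliably factor the large degree-$28$ (resp.\ degree-$14$) factor is desirable, and this is Berwick's role. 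The clean structural input is that $\PSL(3,2)$, being simple and nonabelian, has trivial sign character and so lies in $A_7$, whereas $C_7\rtimes C_6\not\subseteq A_7$ because its multiplier $x\mapsto 3x$ is an odd $6$-cycle. Berwick's auxiliary polynomial (cf.\ the invariant cubic recalled in \cref{sec:galois_septics}) has degree $42$ with roots $b_k\pm\sqrt d$, $k=1,\dots,21$, where $d$ is the squarefree part of $\mathrm{disc}(f)$ and the $b_k$ are the $21$ triple sums forming the length-$21$ orbit (the classes $(1,1,5),(1,3,3),(2,2,3)$, which is the same orbit for $C_7\rtimes C_3$ and $C_7\rtimes C_6$). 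If $G=\PSL(3,2)$ then $\mathrm{disc}(f)$ is a square, $\sqrt d\in\Q$, and $g_d=\bigl(\prod_k(x-b_k-\sqrt d)\bigr)\bigl(\prod_k(x-b_k+\sqrt d)\bigr)$ is a product of two rational polynomials of degree $21$, hence reducible. If $G=C_7\rtimes C_6$ then $C_7\rtimes C_3\subseteq A_7$ already acts transitively on the $21$ values $b_k$ and fixes $\sqrt d$, while the odd element $x\mapsto -x$ permutes the $b_k$ among themselves and negates $\sqrt d$; so $G$ acts transitively on all $42$ roots of $g_d$, and $g_d$ is irreducible.

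The step I expect to be the main obstacle is the analysis behind case (4): giving a self-contained definition of Berwick's quantities $b_k$ and of $g_d$ (in the literature this is phrased through square-root differences and an invariant cubic rather than simply through the degree-$21$ factor of $R(x)$, and the two descriptions must be reconciled), and then proving carefully both the factorisation $g_d=(\text{deg }21)(\text{deg }21)$ when $G=\PSL(3,2)$ and the transitivity-with-sign-flip claim forcing irreducibility when $G=C_7\rtimes C_6$ — which in particular requires checking that $g_d$ is itself squarefree, arrangeable by Tschirnhausen simultaneously with the squarefreeness of $R(x)$. A minor additional point, needed only if \cref{thm:3sets_resolvent} is to be read as a genuine classification rather than a battery of one-directional tests, is that each of the seven groups really occurs as the Galois group of some irreducible septic over $\Q$; for the solvable groups this is the constructive content of \cref{sec:background}, and for $\PSL(3,2)$, $A_7$, $S_7$ it is classical.
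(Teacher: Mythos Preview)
Your proposal is the paper's own argument, carried out in full rather than sketched: identify the factor degrees of $R(x)$ with orbit lengths of $G$ on $3$-sets via the orbit--coset theorem (after a Tschirnhausen transformation to force squarefreeness), compute those orbit lengths for each transitive subgroup of $S_7$, and resolve the two residual ambiguities with the discriminant (for $A_7$ versus $S_7$) and with Berwick's $g_d$ (for case~(4)). The paper's proof is a four-line sketch citing \cref{tab:orbit_lengths}; you have actually produced that table via the gap-pattern analysis and the multiplier action of $G/C_7$, and that computation is correct.

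On one point you go further than the paper. You obtain the $F_{42}$ partition as $14,21$ and the $\PSL(3,2)$ partition as $7,28$, whereas the paper's \cref{tab:orbit_lengths} records the $F_{42}$ entry simply as ``$21$'' and phrases case~(4) accordingly. Your partitions are the correct ones (and already separate the two groups without $g_d$), so you are right to flag the $g_d$ step as the place needing care. Your tentative reading of the $b_k$ as the $21$ triple sums in a length-$21$ orbit cannot be made uniform, since $\PSL(3,2)$ has no such orbit; the natural fix is to take the $b_k$ to be the $21$ roots of the pair-sum resolvent $P^{(2)}(x)=\prod_{i<j}(x-(\alpha_i+\alpha_j))$, on which both $F_{42}$ and $\PSL(3,2)$ act transitively. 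With that reading your parity argument goes through verbatim: $\PSL(3,2)\subseteq A_7$ forces $\sqrt d\in\Q$, so $g_d(x)=P^{(2)}(x-\sqrt d)\,P^{(2)}(x+\sqrt d)$ is a product of two rational degree-$21$ factors; for $F_{42}$ the even index-$2$ subgroup $C_7\rtimes C_3$ is transitive on each $21$-block and any odd element exchanges the $\pm\sqrt d$ blocks, giving a single $G$-orbit of size $42$ and hence irreducibility.
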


\begin{proof}
The factor degrees correspond to orbit sizes of 3-sets under \(G\) \cite{Sha-2}. The partitions \(7^5\), \(7^3\cdot14\), and \(7^2\cdot21\) uniquely identify \(C_7\), \(D_7\), and \(F_{21}\), respectively. For partition \(21\), reducibility of \(g_d(x)\) distinguishes \(\PSL(3,2)\) (stabilizer in \(A_7\)) from \(F_{42}\). For \(35\), the discriminant criterion distinguishes \(A_7\) from \(S_7\). Tschirnhausen transformations preserve \(G\).
\end{proof}

This single-resolvent method is computationally lighter than the triple-resolvent approach and complements it in probabilistic classification studies \cite{Sha-3}.

\section{Database of the irreducible septics}\label{sec:database}

In this section, our objective is to build a database of irreducible polynomials \( f \in \Q[x] \) of degree \( \deg f = n \). The data is organized in a Python dictionary. Each polynomial \( f(x) = \sum_{i=0}^n a_i x^i \) is represented by its corresponding binary form \( f(x, y) = \sum_{i=0}^n a_i x^i y^{n-i} \). In this way, each polynomial is identified with a point in the projective space \( \PP^n \), represented by the integer coordinates
\[
\mathbf{p} = [a_n : \cdots : a_0] \in \PP^n,
\]
where \( \gcd(a_0, \ldots, a_n) = 1 \).

Since \( f(x) \) is irreducible over \( \Q \) and has degree \( n \), we must have \( a_n \neq 0 \) and \( a_0 \neq 0 \). Moreover, its discriminant \( \Delta_f \) is nonzero.

Next, we generate a dataset of these polynomials with a bounded height \( h \). Let denote by \( \mathcal{P}_h^n \) the set of points corresponding to these polynomials, i.e.,
\[
\mathcal{P}_h^n := \{ \mathbf{p} = [a_n : \cdots : a_0] \in \PP^n \mid a_0 a_n \neq 0, \Delta_f \neq 0 \}.
\]
To guarantee that each entry in the database is unique, we index the Python dictionary by the tuple \( (a_0, \ldots, a_n) \). This approach ensures that polynomials are not recorded more than once in the Python dictionary.

For fixed \( h \) and \( n \), the cardinality of \( \mathcal{P}_h^n \) is bounded by
\[
|\mathcal{P}_h^n| \leq 4 h^2 (2 h + 1)^{n-2}.
\]
To make the bound precise, we record the following counting formula for primitive projective points of exact height $h$.
\begin{lem}[Rational Points in Projective Space]
Let $n \geq 1$ and $h \geq 1$ be integers.
Let $P_n(h)$ denote the number of primitive integer points $\mathbf{x} = (x_0, x_1, \ldots, x_n) \in \mathbb{Z}^{n+1}$, modulo sign, with $\max_i |x_i| = h$ and $\gcd(x_0, x_1, \ldots, x_n) = 1$.  
Then
\[
P_n(h) = \frac{ (2h + 1 )^{ n+1 } - ( 2h - 1 )^{ n+1 } }{2}
- \sum_{\substack{\, d \mid h \\ d \geq 2 }} P_n\left( \frac{h}{d} \right)
\]
where the sum runs over all integers $d \geq 2$ dividing $h$.
\end{lem}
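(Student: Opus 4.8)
The plan is to count primitive integer points of exact height $h$ by an inclusion–exclusion (Möbius-type) argument over the gcd. First I would observe that the set of integer points $\mathbf{x} \in \mathbb{Z}^{n+1}$ with $\max_i|x_i| \leq h$ is the cube $\{-h,\dots,h\}^{n+1}$, of cardinality $(2h+1)^{n+1}$, and the subset with $\max_i|x_i| \leq h-1$ has cardinality $(2h-1)^{n+1}$; subtracting gives the count of points with $\max_i|x_i| = h$ exactly, namely $(2h+1)^{n+1} - (2h-1)^{n+1}$. Working modulo sign (identifying $\mathbf{x}$ with $-\mathbf{x}$) halves this, since no point of exact height $h \geq 1$ is fixed by negation (the coordinate of maximal absolute value is nonzero). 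This yields the first term $\tfrac12\big((2h+1)^{n+1}-(2h-1)^{n+1}\big)$ as the count of \emph{all} exact-height-$h$ points mod sign, primitive or not.

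Next I would stratify these points by $d = \gcd(x_0,\dots,x_n)$. If $\mathbf{x}$ has exact height $h$ and gcd equal to $d$, then $\mathbf{x}/d$ is a primitive integer point of exact height $h/d$; conversely, for each $d \mid h$, scaling a primitive point of exact height $h/d$ by $d$ produces an exact-height-$h$ point with gcd exactly $d$, and this correspondence is a bijection compatible with the sign identification. Hence the total count decomposes as $\sum_{d \mid h} P_n(h/d)$, where the $d=1$ term is $P_n(h)$, the quantity we want. Solving for $P_n(h)$ gives
\[
P_n(h) = \frac{(2h+1)^{n+1} - (2h-1)^{n+1}}{2} - \sum_{\substack{d \mid h \\ d \geq 2}} P_n\!\left(\frac{h}{d}\right),
\]
which is exactly the claimed recursion. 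I would also note the base case $P_n(1) = \tfrac12\big(3^{n+1}-1\big)$, since every nonzero point in $\{-1,0,1\}^{n+1}$ is automatically primitive, confirming internal consistency.

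The only genuinely delicate point — and the step I expect to require the most care in the write-up — is verifying the bijection in the stratification: one must confirm that "exact height $h$" behaves correctly under scaling (i.e. $\max_i|x_i| = h \iff \max_i|x_i/d| = h/d$ when $d \mid \gcd$), that the sign quotient is compatible on both sides, and that $d$ necessarily divides $h$ (which follows since $d \mid x_j$ for the index $j$ with $|x_j| = h$). None of this is deep, but it is where an imprecise argument could go wrong; once it is nailed down, the formula follows by rearranging a finite sum. A brief remark that the stated crude bound $|\mathcal{P}_h^n| \leq 4h^2(2h+1)^{n-2}$ is recovered (up to the extra constraints $a_0 a_n \neq 0$, $\Delta_f \neq 0$) by dropping the primitivity correction and bounding $P_n(h) \leq \tfrac12(2h+1)^{n+1}$ would close the loop with the surrounding text.
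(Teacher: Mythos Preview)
Your argument is correct and complete: the count of exact-height-$h$ integer points mod sign is obtained by subtracting nested boxes and halving (no nonzero point is its own negative), and stratifying by the gcd $d$ gives the bijection with primitive points of height $h/d$, whence the recursion follows by isolating the $d=1$ term. The paper itself states this lemma without proof, so there is nothing to compare against; your write-up would fill that gap cleanly.
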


For the case of degree \( d \geq 7 \) and a given height \( h \), one can construct these sets using SageMath as illustrated below:
\begin{verbatim}
PP = ProjectiveSpace(d, QQ)
rational_points = PP.rational_points(h)
\end{verbatim}

\begin{table}[h]
\caption{Counts by height: rational vs.\ irreducible}
\label{tab:height_rational_irreducible}
\centering
\begin{tabular}{|c|r|r|}
\hline
Height $h$ & Rational & Irreducible \\
\hline
1 & 3{,}280 & 916 \\
2 & 94{,}376 & 46{,}552 \\
3 & 863{,}144 & 538{,}170 \\
4 & 4{,}420{,}040 & 3{,}103{,}800 \\
\hline
\end{tabular}
\end{table}

\begin{table}[h]
\caption{Number of points in $\PP^7(\Q)$ by height}
\label{tab:points_height}
\begin{tabular}{|c|r|r|r|}
\hline
Height $h$ & Points with height $\leq h$ & Points with exact height $h$  \\
\hline
1  & 3,280         & 3,280         \\
2  & 192,032       & 188,752        \\
3  & 2,875,840     & 2,683,808     \\
4  & 21,324,768    & 18,448,928     \\
5  & 106,977,568   & 85,652,800     \\
6  & 404,787,648   & 297,810,080    \\
7  & 1,278,364,320 & 873,576,672   \\
8  & 3,466,156,768 & 2,187,792,448 \\
9  & 8,467,372,480 & 5,001,215,712 \\
10 & 18,801,175,808 & 10,333,803,328 \\
\hline
\end{tabular}
\end{table}

After generating the points, the data is normalized by clearing denominators so that all coordinates become integers. We then retain only those polynomials that are irreducible over \( \Q \). For each point \( \mathbf{p} \in \PP^n \), we compute the following:
\[
(a_0, \ldots, a_n) : [H(f), [\xi_0, \ldots, \xi_n, \Delta_f], \quad \text{sig}, \Gal_Q(f)].
\]
Here, \( H(f) \) denotes the height of \( f(x) \), \( [\xi_0, \ldots, \xi_n] \) are the generators of the ring of invariants for binary forms of degree \( n \), the discriminant \( \Delta_f \), sig is the signature, and \( \Gal_Q(f) \) indicates the GAP identifier of the Galois group.

\subsection{Irreducible Septics}
We create a database of all rational points \( \mathbf{p} \in \PP^7 \) with projective height \( h \leq 4 \) such that
\[
f(x) = a_7 x^7 + a_6 x^6 + a_5 x^5 + a_4 x^4 + a_3 x^3 + a_2 x^2 + a_1 x + a_0
\]
is irreducible over \( \Q \). After normalizing the leading
coefficient to $1$, this polynomial has the form \eqref{eq:f-septic}.
The Galois group of an irreducible septic is one of the seven transitive subgroups of \( S_7 \), as classified by Foulkes \cite{foulkes}: \( S_7 \), \( A_7 \), \( \PSL(3,2) \), \(C_7 \rtimes C_6\), \(C_7 \rtimes C_3\), \( D_7 \), or \( C_7 \). Table \ref{tab:group_counts} shows the counts of polynomials with each Galois group for height \( h \leq 4 \).

\begin{table}[h]
\caption{Counts for Groups with Height \( \leq 4 \)}
\label{tab:group_counts}
\begin{tabular}{|c|c|}
\hline
Galois Group & Count \\
\hline
\( S_7 \) & 584,324 \\
\( A_7 \) & 138 \\
\( \PSL(3,2) \) & 136 \\
\( D_7 \) & 18 \\
\(C_7 \rtimes C_6\) & 4 \\
\(C_7 \rtimes C_3\) & 0 \\
\( C_7 \) & 0 \\
\hline
\end{tabular}
\end{table}

The absence of \(C_7 \rtimes C_3\) and \( C_7 \) polynomials at height \( \leq 4 \) may be due to their rarity at low heights, as suggested by Foulkes \cite{foulkes}. For instance, \( C_7 \) polynomials often arise from cyclotomic fields, as shown in Table \ref{tab:C7-polynomials}, and typically appear at higher heights. To validate our database, we can apply Foulkes' resolvent method \cite{foulkes} to compute the factorization of the 30-ic and 120-ic resolvents for sampled polynomials, confirming their Galois groups against our computed GAP identifiers.
\subsection{Cyclic septic $C_7$ polynomials}
\label{sec:C7-construction}
Height-bounded scans show that the cyclic case $C_7$ is rare at small height.
In our search by projective height, the first polynomial with $Gal(f)\cong C_7$ appears at height $h=28$:
\begin{equation*}
  f(x)=x^7 + x^6 - 12x^5 - 7x^4 + 28x^3 + 14x^2 - 9x + 1.
\end{equation*}
This matches the observed scarcity of $C_7$ at low height (see Table~4 and the discussion there).
Cyclotomic constructions of cyclic septic fields using Gaussian periods usually produce polynomials with larger coefficients than typical low-height examples.
%


\section{Constructive Galois Theory}\label{sec:background}
We investigate the realization of finite groups as Galois groups of extensions of \( \Q \), employing geometric and arithmetic techniques involving branched coverings of the projective line and Hurwitz spaces. This section builds on foundational results from Serre \cite{serre} and V\"{o}lklein \cite{volklein}, with a focus on the inverse Galois problem.

\subsection{Coverings of \( \PP^1 \)}
A covering \( \phi: X \to \PP^1 \) is a finite morphism of degree \( n \) from a smooth projective curve \( X \), defined over a field \( K \) (typically \( K = \mathbb{C} \) or \( \Q \)), to the projective line \( \PP^1 \). If the cover is Galois, there exists a finite group \( G \) such that \( X \) is a \( G \)-torsor over \( \PP^1 \), with \( \PP^1 \cong X / G \). Here, \( G \) acts as the group of deck transformations, permuting the \( n \) preimages of a generic point \( t \in \PP^1(K) \). The cover is ramified at a finite set \( S = \{ p_1, \ldots, p_r \} \subset \PP^1 \), and for each \( p_i \), the inertia group at a ramification point above \( p_i \) is cyclic, generated by an element \( g_i \in G \) whose order equals the ramification index. Serre \cite[Chapter 4]{serre} establishes that such a Galois cover corresponds to a finite Galois extension \( L / K(t) \), where \( L = K(X) \) is the function field of \( X \) and \( \Gal(L / K(t)) = G \).

\subsection{Monodromy and Braid Action}
The monodromy of a Galois cover \( \phi: X \to \PP^1 \) with branch points \( \{ p_1, \ldots, p_r \} \) is encoded by a tuple \( (g_1, \ldots, g_r) \in G^r \), where \( g_i \) generates the inertia group above \( p_i \). This tuple satisfies:
\begin{enumerate}
    \item \( g_1 \cdots g_r = 1 \),
    \item \( \langle g_1, \ldots, g_r \rangle = G \).
\end{enumerate}

Such a tuple is a \textbf{Nielsen tuple}, and the set of all tuples with \( g_i \in C_i \) (for conjugacy classes \( \C = (C_1, \ldots, C_r) \)) forms the \textbf{Nielsen class} \( \text{Ni}(G, \C) \), defined by V\"{o}lklein \cite[Chapter 2, Section 2.2]{volklein} as:
\[
\text{Ni}(G, \C) = \{ (g_1, \ldots, g_r) \in C_1 \times \cdots \times C_r \mid g_1 \cdots g_r = 1, \, \langle g_1, \ldots, g_r \rangle = G \} / \text{Inn}(G),
\]
where \( \text{Inn}(G) \) denotes conjugation by elements of \( G \).

The braid group \( B_r \), with generators \( \sigma_1, \ldots, \sigma_{r-1} \) and relations:

\begin{enumerate}[label=\upshape(\roman*)]
    \item \( \sigma_i \sigma_{i+1} \sigma_i = \sigma_{i+1} \sigma_i \sigma_{i+1} \) for \( 1 \leq i \leq r-2 \),
    \item \( \sigma_i \sigma_j = \sigma_j \sigma_i \) for \( |i - j| \geq 2 \),
\end{enumerate}
acts on \( \text{Ni}(G, \C) \). The action of \( \sigma_i \) on a tuple \( (g_1, \ldots, g_r) \) is:
\[
\sigma_i \cdot (g_1, \ldots, g_r) = (g_1, \ldots, g_{i-1}, g_i g_{i+1} g_i^{-1}, g_i, g_{i+2}, \ldots, g_r).
\]
This operation, corresponding to a simple transposition of branch points \( p_i \) and \( p_{i+1} \) in the fundamental group \( \pi_1(\PP^1 \setminus \{ p_1, \ldots, p_r \}) \), preserves \( G \) and \( \C \). Serre \cite[Chapter 5, Section 5.1]{serre} leverages this action to classify covers with identical Galois groups under deformation of branch points.

\subsection{Hurwitz Spaces of Covers with Fixed Ramification Structure}
The Hurwitz space \( \mathcal{H}(G, \C) \) is a moduli space parametrizing isomorphism classes of degree-\( n \) Galois covers \( \phi: X \to \PP^1 \) over \( K \) with Galois group \( G \) and ramification type \( \C \). V\"{o}lklein \cite[Chapter 3, Section 3.1]{volklein} constructs \( \mathcal{H}(G, \C) \) as a complex variety, with points in bijective correspondence with \( \text{Ni}(G, \C) \). Its dimension is \( r - 3 \) for \( r \geq 3 \), accounting for the choice of \( r \) branch points modulo the action of \( \text{PGL}_2(K) \), which has dimension 3. A key result is the irreducibility theorem: \( \mathcal{H}(G, \C) \) is irreducible if the braid group \( B_r \) acts transitively on \( \text{Ni}(G, \C) \) \cite[Theorem 3.2]{volklein}. Serre \cite[Chapter 6, Section 6.2]{serre} shows that \( \mathcal{H}(G, \C) \) admits a model over \( \Q \), enabling arithmetic investigations of Galois realizations.

\subsection{Realizing Covers with Galois Group over \( \mathbb{C}(t) \)}
The Riemann Existence Theorem provides a cornerstone for constructing covers over \( \mathbb{C}(t) \). As stated by Serre \cite[Chapter 4, Section 4.2]{serre}, for any finite group \( G \), ramification type \( \C \), and tuple \( (g_1, \ldots, g_r) \in \text{Ni}(G, \C) \), there exists a Galois cover \( \phi: X \to \PP^1 \) over \( \mathbb{C} \) with:

\begin{enumerate}[label=\upshape(\roman*)]
    \item Branch points \( \{ p_1, \ldots, p_r \} \subset \PP^1(\mathbb{C}) \),
    \item Monodromy tuple \( (g_1, \ldots, g_r) \),
    \item Galois group \( G \).
\end{enumerate}

The extension \( \mathbb{C}(X) / \mathbb{C}(t) \) is Galois with \( \Gal(\mathbb{C}(X) / \mathbb{C}(t)) = G \), and the cover’s structure is uniquely determined by the monodromy up to isomorphism and braid group action. This theorem, rooted in the topological classification of covers via \( \pi_1(\PP^1 \setminus \{ p_1, \ldots, p_r \}) \to G \), ensures that every finite group is realizable over \( \mathbb{C}(t) \).

\subsection{Rational Points on Hurwitz Spaces and Realizing Covers over \( \Q(t) \)}
A rational point on \( \mathcal{H}(G, \C) \), when defined over \( \Q \), corresponds to a Galois cover \( \phi: X \to \PP^1 \) over \( \Q(t) \) with Galois group \( G \) and ramification type \( \C \). Such a cover arises from an irreducible polynomial \( f(x, t) \in \Q(t)[x] \) of degree \( n \), with branch points in \( \PP^1(\Q) \), satisfying \( \Gal(\Q(X) / \Q(t)) = G \). V\"{o}lklein \cite[Chapter 4, Section 4.1]{volklein} identifies the existence of such points as critical to the inverse Galois problem over \( \Q(t) \). However, the arithmetic geometry of \( \mathcal{H}(G, \C) \) imposes constraints: the action of \( \Gal(\overline{\Q} / \Q) \) on its components may obstruct rationality. Serre \cite[Chapter 6, Section 6.3]{serre} analyzes these obstructions, noting that descent to \( \Q \) requires compatibility with the Galois action on \( \text{Ni}(G, \C) \).

\subsection{Hilbert’s Irreducibility Theorem and Realizing Covers over \( \Q \)}
Hilbert’s Irreducibility Theorem bridges covers over \( \Q(t) \) to number fields. For an irreducible polynomial \( f(x, t) \in \Q(t)[x] \) defining a Galois extension \( \Q(X) / \Q(t) \) with \( \Gal(\Q(X) / \Q(t)) = G \), the theorem asserts that for most \( t_0 \in \Q \), the specialized polynomial \( f(x, t_0) \in \Q[x] \) remains irreducible with \( \Gal(f(x, t_0) / \Q) = G \). Formally, Serre \cite[Chapter 3, Section 3.1]{serre} states:

\begin{enumerate}[label=\upshape(\roman*)]
    \item Let \( f(x, t) \in \Q[t][x] \) be irreducible over \( \Q(t) \) with Galois group \( G \).
    \item The set \( \{ t_0 \in \Q \mid f(x, t_0) \text{ is reducible or } \Gal(f(x, t_0) / \Q) \neq G \} \) is a thin set in \( \Q \), i.e., contained in the image of a finite union of proper subvarieties under rational maps.
\end{enumerate}

This result, applied by V\"{o}lklein \cite[Chapter 5, Section 5.2]{volklein}, enables the construction of number fields with prescribed Galois groups by specializing rational points on \( \PP^1(\Q) \).

\subsection{Constructing Polynomials with Galois Group \( C_7 \)}\label{sec:constructing_C7}

We now turn to the problem of realizing the cyclic group \( C_7 \) of order 7 as a Galois group over \( \Q \), constructing explicit septic polynomials via the framework of constructive Galois theory. Our approach marries geometric insights from branched coverings and Hurwitz spaces with an algebraic construction using cyclotomic fields, culminating in the polynomials listed in Table \ref{tab:C7-polynomials}.

\subsubsection{Geometric Construction via Branched Coverings}
Consider a degree-7 Galois cover \( \phi: X \to \PP^1 \) with Galois group \( C_7 \). We define its ramification structure by selecting a type \( \C = (C_1, C_2, C_3, C_4) \), where each \( C_i \) is a conjugacy class in \( C_7 \). Let \( \sigma \) be a generator of \( C_7 \), represented as a 7-cycle. The non-trivial conjugacy classes of \( C_7 \) are \( \{ \sigma^k \} \) for \( k = 1, 2, 3, 4, 5, 6 \), each with cycle type (7), plus the trivial class \{1\}. We choose \( \C = (\langle \sigma \rangle, \langle \sigma^2 \rangle, \langle \sigma^4 \rangle, \langle \sigma^3 \rangle) \), corresponding to cycle types (7), (7), (7), and (3,3,1), respectively—the last indicating ramification indices 3 at two points and 1 at a third.

To determine the genus \( g \) of the curve \( X \), we apply the Riemann-Hurwitz formula:
\[
2g - 2 = n \cdot (-2) + \sum (e_i - 1),
\]
where \( n = 7 \) is the degree, and \( e_i \) are the ramification indices over the branch points. For our ramification type:

\begin{enumerate}[label=\upshape(\roman*)]
    \item Three points with \( e_1 = e_2 = e_3 = 7 \), contributing \( 3 \cdot (7 - 1) = 18 \),
    \item One point with cycle type (3,3,1), contributing \( (3 - 1) + (3 - 1) + (1 - 1) = 4 \).
\end{enumerate}

Thus:
\[
2g - 2 = 7 \cdot (-2) + (6 + 6 + 6 + 4) = -14 + 22 = 8 \implies g = 5.
\]
Hence, \( X \) is a genus-5 curve.

The Riemann Existence Theorem guarantees the existence of such a cover over \( \mathbb{C} \). Define a monodromy tuple \( (\sigma, \sigma^2, \sigma^4, \sigma^3) \in \text{Ni}(C_7, \C) \), where \( \text{Ni}(C_7, \C) \) denotes the Nielsen class of tuples generating \( C_7 \) with classes in \( \C \). Compute the product:
\[
\sigma \cdot \sigma^2 \cdot \sigma^4 \cdot \sigma^3 = \sigma^{1 + 2 + 4 + 3} = \sigma^{10} = \sigma^3,
\]
since \( \sigma^7 = 1 \). To satisfy the condition that the product equals the identity, we adjust via the braid group action, e.g., to \( (\sigma, \sigma^2, \sigma^4, \sigma^{-7}) \), yielding:
\[
\sigma \cdot \sigma^2 \cdot \sigma^4 \cdot \sigma^{-7} = \sigma^{1 + 2 + 4 - 7} = \sigma^0 = 1,
\]
while still generating \( C_7 \).

The Hurwitz space \( \mathcal{H}(C_7, \C) \) parametrizes these covers up to isomorphism. A rational point on \( \mathcal{H}(C_7, \C) \), defined over \( \Q \), corresponds to a cover \( \phi: X \to \PP^1 \) over \( \Q(t) \) with Galois group \( C_7 \), represented by an irreducible septic polynomial \( f(x, t) \in \Q(t)[x] \).

\subsubsection{Algebraic Construction via Cyclotomic Fields}
To construct explicit examples, we turn to cyclotomic fields, which naturally produce polynomials with cyclic Galois groups. The method proceeds as follows:
\begin{enumerate}
    \item \textbf{Select a prime \( p \equiv 1 \pmod{7} \)}: This ensures 7 divides \( p - 1 \). Examples include \( p = 29, 43, 71 \).
    \item \textbf{Form the cyclotomic field}: \( \Q(\zeta_p) \), where \( \zeta_p \) is a primitive \( p \)-th root of unity, has degree \( p - 1 \) over \( \Q \), with Galois group \( \Gal(\Q(\zeta_p) / \Q) \cong (\mathbb{Z}/p\mathbb{Z})^\times \), cyclic of order \( p - 1 \).
    \item \textbf{Identify the degree-7 subfield}: Since 7 divides \( p - 1 \), there exists a unique subfield \( L \subset \Q(\zeta_p) \) of degree 7, fixed by the subgroup of order \( (p - 1)/7 \).
    \item \textbf{Compute the minimal polynomial}: The minimal polynomial of a primitive element of \( L \) is an irreducible septic polynomial over \( \Q \) with Galois group \( C_7 \).
\end{enumerate}

This algebraic construction aligns with the geometric framework: the field extension \( L / \Q \) corresponds to a cover \( \phi: X \to \PP^1 \) with ramification type \( \C \), represented by a rational point on \( \mathcal{H}(C_7, \C) \).

\begin{exa} [\( p = 29 \)]
Take \( p = 29 \), where \( p - 1 = 28 \) and \( 28 / 7 = 4 \). The field \( \Q(\zeta_{29}) \) has degree 28, with a cyclic Galois group of order 28. The subgroup of order 4 fixes a subfield \( L \) of degree 7. The minimal polynomial of a primitive element of \( L \) is:
\[
f(x) = x^7 + x^6 - 12x^5 - 7x^4 + 28x^3 + 14x^2 - 9x + 1,
\]
with height 28, listed in Table \ref{tab:C7-polynomials}. Its Galois group is \( C_7 \), verified by Foulkes’ resolvent method (Table \ref{tab:foulkes_resolvents}), showing factorizations \( T(x) \to (1,1) \), \( \Psi(\psi) \to (1,7,7,7,7) \), and \( \Phi(\phi) \to (1, 17 \times 7) \).
\end{exa}

\subsubsection{Specialization and Hilbert’s Irreducibility}
Specializing the cover \( \phi: X \to \PP^1 \) over \( \Q(t) \) at a rational point \( t_0 \in \Q \) yields a polynomial \( f(x, t_0) \in \Q[x] \). Hilbert’s Irreducibility Theorem ensures that, for most \( t_0 \), \( f(x, t_0) \) remains irreducible with Galois group \( C_7 \). The polynomial for \( p = 29 \) above is such a specialization, as are the others in Table \ref{tab:C7-polynomials}, which catalog septic polynomials derived from cyclotomic subfields for various \( p \).

This synthesis of geometric and algebraic methods not only realizes \( C_7 \) over \( \Q \) but also exemplifies the power of constructive Galois theory, bridging abstract covers to tangible polynomials.

\begin{table}[h]
  \centering
  \caption{Irreducible degree-7 polynomials with Galois group \( C_7 \).}
  \label{tab:C7-polynomials}
  \begin{tabular}{|c|c|c|c|}
    \hline
    \textbf{Coefficients} & \textbf{Height} & \textbf{\( p \)} & \textbf{Galois} \\
    \hline
    \( (1,1,-12,-7,28,14,-9,1) \)            & 28       & 29   & \( C_7 \) \\ \hline
    \( (1,1,-18,-35,38,104,7,-49) \)         & 104      & 43   & \( C_7 \) \\ \hline
    \( (1,1,-30,3,254,-246,-245,137) \)      & 254      & 71   & \( C_7 \) \\ \hline
    \( (1,1,-48,37,312,-12,-49,-1) \)        & 312      & 113  & \( C_7 \) \\ \hline
    \( (1,1,-54,-31,558,-32,-1713,1121) \)   & 1713     & 127  & \( C_7 \) \\ \hline
    \( (1,1,-84,-217,1348,3988,-1433,-1163) \) & 3988   & 197  & \( C_7 \) \\ \hline
    \( (1,1,-90,69,1306,124,-5249,-4663) \)  & 5249     & 211  & \( C_7 \) \\ \hline
    \( (1,1,-102,-195,1850,978,-8933,5183) \) & 8933    & 239  & \( C_7 \) \\ \hline
    \( (1,1,-120,-711,-784,1956,2863,-343) \) & 2863    & 281  & \( C_7 \) \\ \hline
    \( (1,1,-144,399,2416,-10808,10831,-1237) \) & 10831 & 337  & \( C_7 \) \\ \hline
    \( (1,1,-162,-201,7822,12322,-107717,-193369) \) & 193369 & 379 & \( C_7 \) \\ \hline
    \( (1,1,-180,-103,6180,11596,-25209,-49213) \)   & 49213  & 421 & \( C_7 \) \\ \hline
    \( (1,1,-192,275,3952,4136,-81,-863) \)           & 4136   & 449 & \( C_7 \) \\ \hline
    \( (1,1,-198,-907,4302,20582,-18973,-56911) \)    & 56911  & 463 & \( C_7 \) \\ \hline
    \( (1,1,-210,1423,-1410,-8538,9203,19427) \)      & 19427  & 491 & \( C_7 \) \\ \hline
    \( (1,1,-234,335,13254,-42874,-55309,71879) \)    & 71879  & 547 & \( C_7 \) \\ \hline
    \( (1,1,-264,-151,13288,18556,-69425,34621) \)    & 69425  & 617 & \( C_7 \) \\ \hline
    \( (1,1,-270,116,19848,-31904,-375552,720896) \)  & 720896 & 631 & \( C_7 \) \\ \hline
    \( (1,1,-282,1345,5370,-30042,-14893,115169) \)   & 115169 & 659 & \( C_7 \) \\ \hline
    \( (1,1,-288,316,23504,-53056,-541952,1722368) \) & 1722368 & 673 & \( C_7 \) \\ \hline
    \( (1,1,-300,1631,5140,-23794,-59049,-18773) \)   & 59049  & 701 & \( C_7 \) \\ \hline
    \( (1,1,-318,-1031,26070,125148,-420841,-2302639) \) & 2302639 & 743 & \( C_7 \) \\ \hline
    \( (1,1,-324,-1483,20876,129744,36999,-54027) \)   & 129744 & 757 & \( C_7 \) \\ \hline
    \( (1,1,-354,979,30030,-111552,-715705,2921075) \) & 2921075 & 827 & \( C_7 \) \\ \hline
    \( (1,1,-378,-973,13106,-9624,-64665,91125) \)     & 91125   & 883 & \( C_7 \) \\ \hline
    \( (1,1,-390,-223,18058,30856,-116657,-225929) \)  & 225929  & 911 & \( C_7 \) \\ \hline
    \( (1,1,-408,992,48064,-204560,-1603520,8290816) \) & 8290816 & 953 & \( C_7 \) \\ \hline
    \( (1,1,-414,-4381,-10434,32702,167651,182573) \)  & 182573  & 967 & \( C_7 \) \\ \hline
  \end{tabular}
\end{table}

\subsection{Septic extensions with Galois group \(C_7 \rtimes C_3\)}
\label{sec:C7xC3-septics}

We construct infinite families of monic septics over \(\Q\) whose Galois group is
\(C_7 \rtimes C_3\).  A search by projective height found the smallest example at
height \(h=16\):
\[
f_{min}(x)=x^7-8x^5-2x^4+16x^3+6x^2-6x-2.
\]
The pattern visible here suggests a systematic approach using Chebyshev
polynomials of the first kind, defined by the recurrence
\[
T_0(x)=1,\qquad T_1(x)=x,\qquad
T_{n+1}(x)=2x\,T_n(x)-T_{n-1}(x).
\]
Explicit computation gives
\begin{align*}
T_2(x) &= 2x^2-1,\\
T_3(x) &= 4x^3-3x,\\
T_4(x) &= 8x^4-8x^2+1,\\
T_5(x) &= 16x^5-20x^3+5x,\\
T_6(x) &= 32x^6-48x^4+18x^2-1,\\
T_7(x) &= 64x^7-112x^5+56x^3-7x.
\end{align*}
Let \(S=u^2+7v^2\).  The scaled polynomial
\[
S^{7/2}\,T_7\!\bigl(x/\sqrt{S}\bigr)
 =64x^7-112S x^5+56S^2 x^3-7S^3 x
\]
has integer coefficients in \(S\) because the exponents of \(S\) are
\((7-j)/2\) for \(j=1,3,5,7\), all nonnegative integers.  Adding a constant
term \(-uS^3\) produces the two-parameter family
\[
G_7(x;u,v)=64x^7-112S x^5+56S^2 x^3-(7S^3+uS^3),
\qquad S=u^2+7v^2.
\]
For \((u,v)=(1,1)\) we obtain \(S=8\) and
\[
G_7(x;1,1)=64x^7-896x^5+3584x^3-3584x-512.
\]

\begin{lem}[Integral scaling]
\label{lem:poly-scaling}
For any indeterminate \(S\) (or nonzero element of a characteristic-zero field),
\(S^{7/2}T_7(x/\sqrt{S})\) belongs to \(\Q[S][x]\) and has degree \(7\) with leading
coefficient \(64\).
\end{lem}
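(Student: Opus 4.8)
The plan is to reduce everything to the closed form of $T_7$ already computed above and to track the powers of $S$ termwise. First I would recall that the recurrence yields
\[
T_7(x) = 64x^7 - 112x^5 + 56x^3 - 7x,
\]
a polynomial all of whose monomials $x^{7-2k}$ ($k=0,1,2,3$) have odd degree — this is the instance for $n=7$ of the general parity fact $T_n(-x)=(-1)^nT_n(x)$, and it is precisely what makes the argument work. Substituting $x \mapsto x/\sqrt{S}$ into the monomial $x^{7-2k}$ produces $x^{7-2k}\,S^{-(7-2k)/2}$, and multiplying by the prefactor $S^{7/2}$ gives $S^{7/2-(7-2k)/2}\,x^{7-2k} = S^{k}x^{7-2k}$. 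Hence
\[
S^{7/2}\,T_7\!\bigl(x/\sqrt{S}\bigr) = 64x^7 - 112S\,x^5 + 56S^2x^3 - 7S^3 x,
\]
exactly as displayed just before the lemma.

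From this identity the three assertions are immediate. The exponents of $S$ that occur are $k \in \{0,1,2,3\}$, all nonnegative integers, so no $\sqrt{S}$ survives and the right-hand side is a genuine element of $\Q[S][x]$ (in fact of $\mathbb{Z}[S][x]$); in particular it makes unambiguous sense when $S$ is specialized to any nonzero element of a characteristic-zero field, with no choice of square root required. The $x$-degree equals $7$ because the $k=0$ term $64x^7$ is nonzero while every other term has strictly smaller $x$-degree, and the leading coefficient is the coefficient $64$ of $x^7$ in $T_7$, which is unaffected by the substitution.

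I do not anticipate a genuine obstacle: the only subtlety worth a sentence is the formal status of $\sqrt{S}$. I would handle it by first performing the substitution in the ring $\Q(x)[\sqrt{S}]$ (or $\Q[x,\sqrt{S}]$) and then observing a posteriori that the result lies in the subring $\Q[S][x]$; alternatively, one may simply take $64x^7 - 112Sx^5 + 56S^2x^3 - 7S^3x$ as the definition of $S^{7/2}T_7(x/\sqrt{S})$, justified by the termwise computation above. Either route makes the statement rigorous with essentially no further work, and the same reasoning applies verbatim to $T_n$ for any odd $n$.
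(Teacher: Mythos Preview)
Your proof is correct and follows essentially the same approach as the paper: both note that $T_7$ has only odd-degree terms, track the exponent of $S$ on each monomial after the substitution, and observe that these exponents are nonnegative integers with leading term $64x^7$. Your indexing via $k$ (so that the exponent of $S$ is $k$) is a relabeling of the paper's indexing via $j$ (exponent $(7-j)/2$), and your extra remarks on the formal handling of $\sqrt{S}$ only add rigor without changing the argument.
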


\begin{proof}
Only odd powers appear in \(T_7(x)\).  Each term \(c_j x^j\) contributes
\(c_j S^{(7-j)/2} x^j\); the exponent \((7-j)/2\) is a nonnegative integer, and
\(c_7=64\).
\end{proof}

\begin{thm}
\label{thm:F21-cheb}
Define
\[
G_7(x;u,v)=
\bigl(u^2+7v^2\bigr)^{7/2}T_7\!\bigl(x/\sqrt{u^2+7v^2}\bigr)
 -u\,(u^2+7v^2)^3.
\]
\begin{enumerate}
\item Over \(\Q(u,v)\) the Galois group of \(G_7\) is \(C_7\rtimes C_3\).
\item For coprime integers \((r,s)\) with \(7\nmid rs\), all but a thin set of
specialisations \(G_7(x;r,s)\in\Q[x]\) are irreducible with the same Galois group
\(C_7\rtimes C_3\).
\end{enumerate}
Hence there are infinitely many non-isomorphic degree-7 fields over \(\Q\) whose
Galois closures have group \(C_7\rtimes C_3\).
\end{thm}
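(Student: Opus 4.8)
The plan is to make the roots of $G_7$ completely explicit via the classical Chebyshev substitution and then read the Galois group off the resulting cyclotomic description. Write $S=u^2+7v^2$ and $\alpha=u+v\sqrt{-7}$, so that $\alpha\bar\alpha=S$ with $\bar\alpha=u-v\sqrt{-7}$. Using \cref{lem:poly-scaling}, the substitution $x=\sqrt S\,t$ turns $G_7=0$ into $T_7(t)=u/\sqrt S$, and the defining identity $T_7\!\bigl(\tfrac12(w+w^{-1})\bigr)=\tfrac12(w^7+w^{-7})$ of the first-kind Chebyshev polynomial shows a root has the form $t=\tfrac12(w+w^{-1})$ with $w^7=u/\sqrt S+\sqrt{u^2/S-1}=\alpha/\sqrt S$ (using $u^2-S=-7v^2$). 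Putting $W=\sqrt S\,w$ clears the square root, and one finds that the seven roots of $G_7$ are $x_k=\tfrac12\bigl(W\zeta_7^{\,k}+\bar W\zeta_7^{-k}\bigr)$ for $k=0,\dots,6$, where $W^7=S^3\alpha$, $\bar W=S/W$ (so $\bar W^7=S^3\bar\alpha$), and $\zeta_7$ is a primitive $7$th root of unity. The point that makes the construction work is that $\sqrt{-7}\in\mathbb Q(\zeta_7)$: thus $W^7=S^3\alpha$ already lies in $M:=\mathbb Q(u,v,\zeta_7)$, and the splitting field $\widetilde L$ of $G_7$ over $F:=\mathbb Q(u,v)$ is contained in $M(W)$.

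Next I would pin down the degree. We have $[M:F]=6$, and by Kummer theory (since $\zeta_7\in M$) $[M(W):M]\in\{1,7\}$, with value $7$ exactly when $S^3\alpha$ is not a $7$th power in $M$; since $S^3\alpha=\alpha^4\bar\alpha^3$ and $\alpha,\bar\alpha$ are distinct irreducibles in the unique factorization domain $\mathbb Q(\zeta_7)[u,v]$ with exponents $4,3\not\equiv0\pmod 7$, this holds, so $[M(W):F]=42$. Checking that $W$ is a root of $x^{14}-2S^3u\,x^7+S^7\in F[x]$ and that all its roots lie in $M(W)$ shows $M(W)/F$ is Galois, with group $G$ of order $42$; moreover $\widetilde L=F(x_0,\dots,x_6)=M(W)^N$ where $N=\{g\in G:g(x_k)=x_k\text{ for all }k\}$.

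The heart of the proof is computing the action of $G$ on the roots, indexed by $\mathbb Z/7$. An element $g\in G$ sends $\zeta_7\mapsto\zeta_7^a$; because $g(\sqrt{-7})=\bigl(\tfrac a7\bigr)\sqrt{-7}$, it fixes $\alpha$ when $a$ is a quadratic residue ($a\in\{1,2,4\}$) and interchanges $\alpha\leftrightarrow\bar\alpha$ otherwise ($a\in\{3,5,6\}$), so $g(W)=\zeta_7^bW$ in the first case and $g(W)=\zeta_7^b\bar W$ in the second. Substituting into $x_k=\tfrac12(W\zeta_7^k+\bar W\zeta_7^{-k})$ and using $\bar W=S/W$, one gets $g(x_k)=x_{ak+b}$ in the first case and $g(x_k)=x_{-ak-b}$ in the second; in both cases $g$ acts on $\mathbb Z/7$ by an affine map $k\mapsto ck+d$ whose multiplier $c$ lies in the cube subgroup $\{1,2,4\}\le(\mathbb Z/7)^\times$, because $-\{3,5,6\}=\{1,2,4\}$. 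Hence the action factors through $C_7\rtimes C_3$ (order $21$); it is surjective, since $(a,b)=(1,1)$ gives a $7$-cycle and $(a,b)=(2,0)$ a multiplier of order $3$, which together generate all of $C_7\rtimes C_3$; so $|N|=42/21=2$, with $N=\{\mathrm{id},g_0\}$ and $g_0\colon\zeta_7\mapsto\zeta_7^{-1},\ W\mapsto\bar W$ an involution. Therefore $\Gal(\widetilde L/F)=G/N\cong C_7\rtimes C_3$, and transitivity of the $C_7$ on the roots forces $G_7$ irreducible over $F$, proving (1). (As a consistency check, $\widetilde L$ contains the cyclic cubic field $\mathbb Q(\zeta_7+\zeta_7^{-1})$ — the fixed field of the normal $C_7$ — but not $\mathbb Q(\sqrt{-7})$, since $C_7\rtimes C_3$ has no subgroup of index $2$.)

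Part (2) is then Hilbert's Irreducibility Theorem applied to this two-parameter family: it yields a thin set $\Theta\subset\mathbb Q^2$ outside of which $G_7(x;r,s)$ is irreducible over $\mathbb Q$ with Galois group $C_7\rtimes C_3$, and the exceptional pairs among coprime $(r,s)$ with $7\nmid rs$ form a subset of $\Theta$, hence a thin set (the conditions $\gcd(r,s)=1$ and $7\nmid rs$ are natural normalisations keeping $S\ne 0$ and $\alpha,\bar\alpha$ coprime away from $7$, so that the non-$7$th-power conditions used above persist, but HIT is what does the work). For (3), infinitely many of these specialisations are pairwise non-isomorphic as fields: by the standard argument a fixed degree-$7$ number field $K$ can equal $\mathbb Q[x]/(G_7(x;r,s))$ only for $(r,s)$ in a thin set — equivalently $|\operatorname{disc}G_7(x;r,s)|\to\infty$ along the family after removing fixed factors, so the conductors are unbounded — hence no single $K$ accounts for all but a thin set of parameters. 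The main obstacle is the bookkeeping in the third paragraph: one must track the branch of $\sqrt S$ and the embedding $\sqrt{-7}\in\mathbb Q(\zeta_7)$ carefully enough to see that the Galois multiplier is confined to the cubic residues, which is exactly what collapses the naive $C_7\rtimes C_6$ down to $C_7\rtimes C_3$.
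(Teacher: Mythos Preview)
Your proof is correct and substantially more informative than the paper's. The paper does not carry out the computation at all: it records that $G_7$ is a genuine septic via \cref{lem:poly-scaling}, then cites the $p=7$ case of the Chebyshev construction of Bruen--Jensen--Yui for the generic Galois group, invokes their effective Hilbert irreducibility theorem for part~(2), and appeals to distinct discriminants for the infinitude of fields. You have instead reconstructed the whole argument from first principles: the substitution $x=\sqrt{S}\,t$ and the identity $T_7\bigl(\tfrac12(w+w^{-1})\bigr)=\tfrac12(w^7+w^{-7})$ give the explicit root parametrisation $x_k=\tfrac12(W\zeta_7^{\,k}+\bar W\zeta_7^{-k})$ with $W^7=S^3\alpha$ and $\bar W=S/W$; the Kummer/UFD step using $S^3\alpha=\alpha^4\bar\alpha^3$ in $\Q(\zeta_7)[u,v]$ pins down $[M(W):F]=42$; and the decisive observation---that $\sqrt{-7}\in\Q(\zeta_7)$, so an automorphism with $\zeta_7\mapsto\zeta_7^{a}$ acts on $\alpha$ according to the Legendre symbol $\bigl(\tfrac a7\bigr)$, forcing the induced affine multiplier on $\mathbb Z/7$ into the cube group $\{1,2,4\}$ in both the residue and nonresidue cases---is exactly what collapses $C_7\rtimes C_6$ to $C_7\rtimes C_3$, now made completely explicit. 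Your identification of the order-$2$ kernel $N=\langle g_0\rangle$ with $g_0\colon \zeta_7\mapsto\zeta_7^{-1}$, $W\mapsto\bar W$ is correct, and the consistency check with $F(\zeta_7+\zeta_7^{-1})\subset\widetilde L$ but $F(\sqrt{-7})\not\subset\widetilde L$ is a nice touch. What the citation buys the paper is brevity and an effective thin-set bound; what your route buys is a self-contained proof that actually explains \emph{why} this particular quadratic form $u^2+7v^2$ produces $C_7\rtimes C_3$ rather than $C_7\rtimes C_6$. For part~(3) both arguments are at the usual level of informality; your thin-set formulation (a fixed field $K$ can arise only for a thin set of $(r,s)$) is standard and adequate.
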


\begin{proof}
Lemma~\ref{lem:poly-scaling} confirms that \(G_7\) is a genuine septic over the
function field.  The generic Galois group \(C_7\rtimes C_3\)is the \(p=7\) instance of the
Chebyshev construction in Bruen--Jensen--Yui
\ polycite{BruenJensenYui86}.  Their effective Hilbert irreducibility theorem
guarantees that integer specialisations with \(7\nmid rs\), outside a thin set,
preserve irreducibility and the Galois group.  Distinct discriminants arising
from infinitely many such \((r,s)\) yield non-isomorphic fields.
\end{proof}

Our computation pipeline discovered 24,374  septics with Galois
group \(C_7\rtimes C_3\).  Together with the 84 septics that we have, the
collection now contains 24,458 polynomials with Galois
group \(C_7\rtimes C_3\) .
%

\section{Machine Learning Approaches for Classifying Galois Groups of Irreducible Degree-7 Polynomials}
\label{sec:ml-septics}
In this section, we describe the machine learning strategy we used to classify the Galois groups of irreducible degree-7 polynomials over $\mathbb{Q}$. Our method spans the full pipeline, from assembling the dataset to setting up baseline classifiers and then improving them by incorporating algebraic invariants. We begin with an overview of how we constructed the dataset, followed by details on our initial models, one that tackles all groups at once and another that sets aside the dominant $S_7$ cases and conclude with a refined version that draws on key features rooted in Galois theory.

To carry out these experiments, we required a large set of irreducible monic degree-7 polynomials over $\mathbb{Q}$, each tagged with its corresponding Galois group. We put this together by combining entries from the LMFDB \cite{lmfdb} with some extra polynomials that we generated ourselves. Pulling data from the LMFDB for all degree-7 number fields gave us 1,163,875 entries in total, where each one provides the minimal polynomial, discriminant, Galois group in transitive notation, and class group. The breakdown of groups in this LMFDB pull is summarized in Table~\ref{tab:lmfdb}.

\begin{table}[htb]
\centering
\caption{Galois groups in the LMFDB degree-7 data.}
\label{tab:lmfdb}
\begin{tabular}{@{}lr@{}}
\toprule
Group & Count \\
\midrule
$S_7$ & 1,062,232 \\
$A_7$ & 56,887 \\
$\mathrm{PSL}(3,2)$ & 40,861 \\
$C_7 \rtimes C_6$ & 1,547 \\
$D_7$ & 2,135 \\
$C_7 \rtimes C_3$ & 84 \\
$C_7$ & 129 \\
\bottomrule
\end{tabular}
\end{table}

Integrating the locally computed polynomials as detailed in \S\ref{sec:database}, \S\ref {sec:constructing_C7},\S\ref{sec:C7xC3-septics} increased the total to 1,686,353 unique entries. The updated distribution for this combined dataset appears in Table~\ref{tab:full}.
\begin{table}[htb]
\centering
\caption{Galois groups in the full dataset.}
\label{tab:full}
\begin{tabular}{@{}lr@{}}
\toprule
Group & Count \\
\midrule
$S_7$ & 1,559,957 \\
$A_7$ & 56,997 \\
$\mathrm{PSL}(3,2)$ & 40,977 \\
$C_7 \rtimes C_3$ & 24,457 \\
$D_7$ & 2,163 \\
$C_7 \rtimes C_6$ & 1,550 \\
$C_7$ & 252 \\
\bottomrule
\end{tabular}
\end{table}

This final dataset, named \textbf{AIMS-7}, is available at \cite{mezini2025aims7}. It includes representatives from every transitive subgroup of $S_7$ and serves as the foundation for all our training, validation, and testing.
\subsection{All-Groups Baseline Model}
\label{subsec:allgroups-coeffs}

To establish a baseline for classifying Galois groups based solely on polynomial coefficients, we first trained a model on the full AIMS-7 dataset. This allowed us to assess the information contained in the raw coefficients regarding the seven transitive subgroups listed in Table~\ref{tab:full}: $S_7$, $A_7$, $\mathrm{PSL}(3,2)$, $C_7 \rtimes C_3$, $D_7$, $C_7 \rtimes C_6$, and $C_7$.

We employed a histogram-based gradient boosting classifier, chosen for its efficiency with large datasets and numerical features. The hyper parameters included a learning rate of 0.08, a maximum of 500 iterations with early stopping after 20 unchanged iterations, and L2 regularization of $10^{-4}$. To address the severe class imbalance, where $S_7$ accounts for more than 92\% of the entries, we assigned sample weights proportional to $(\text{median count} / \text{class count}(c))^{0.5}$, thereby emphasizing the underrepresented groups without excessive bias.

The features consisted only of the coefficients, expanded into columns $a_0$ through $a_7$, with no additional preprocessing. We performed a stratified 60/40 train-test split, resulting in 1,011,811 polynomials for training and 674,542 for testing, while maintaining the original class proportions. On the training set, 5-fold stratified cross-validation yielded balanced accuracies ranging from 0.7357 to 0.7625, with a mean of $0.7526 \pm 0.0095$. These outcomes indicate that the coefficients alone convey substantial details about the Galois groups.

After retraining on the full training set, evaluation on the test set gave a balanced accuracy of 0.7726 and an overall accuracy of 0.9479. As expected given the imbalance, performance was strong for $S_7$  with precision 0.9934 and recall 0.9653. And for  $C_7 \rtimes C_3$ the precision is  0.9982 and the recall is 0.9952. Here, precision measures the proportion of correct positive predictions among all positive predictions for a class, like how accurate the model is when it predicts that group, while recall indicates the proportion of actual instances of that class that were correctly identified so how well the model finds all true cases. However, distinguishing $A_7$ and $\mathrm{PSL}(3,2)$ proved challenging: $A_7$ achieved precision 0.5314 and recall 0.8270, while $\mathrm{PSL}(3,2)$ had precision 0.4344 and recall 0.4552, with frequent misclassifications between them and toward $S_7$. For the smaller groups, the weighting improved recall at the cost of precision for instance, $C_7$ reached recall 0.8911 with precision 0.6250, $C_7 \rtimes C_6$ recall 0.6306 with precision 0.2787, and $D_7$ recall 0.6439 with precision 0.1307.

In summary, this coefficients-only baseline demonstrates that basic polynomial data provides meaningful classification power. Yet the predominance of $S_7$ encourages over prediction of that group, limiting accuracy for the rarer cases. To improve on this, we subsequently restricted attention to the non-$S_7$ polynomials and incorporated algebraic invariants.
\subsection{Non-$S_7$ Model with Algebraic Features}

For a  better separate the rarer Galois groups, we limited our work to the 126,396 polynomials from the AIMS-7 dataset without $S_7$ as the Galois group. This subset includes $A_7$, $\mathrm{PSL}(3,2)$, $C_7 \rtimes C_3$, $D_7$, $C_7 \rtimes C_6$, and $C_7$. It directly handles the big $S_7$ imbalance shown in Table~\ref{tab:full}—over 92\% of the full set. Now we can focus on the fine algebraic details that help tell these groups apart. Class sizes still differ in this subset. We kept using sample weights: $w(c) \propto (\text{median count} / \text{class count})^{0.5}$. This helps highlight small groups like $C_7$.

We used the same histogram-based gradient boosting classifier as before. Settings stayed the same: learning rate 0.08, up to 500 iterations, early stop after 20 unchanged ones, and L2 regularization of $10^{-4}$. It suits large, uneven datasets and simple number inputs well.

Features included the coefficients $a_0$ through $a_7$. We added more: the discriminant's sign and its stabilized log of the absolute value. Plus, five j-invariants $j_0$ through $j_4$. We applied a signed $\log_{10}(1 + |x|)$ to each extra feature. This controls large or small values but keeps signs.

We split the data 60/40, stratified by class. That gave 75,837 training samples and 50,559 for testing. This size works for reliable checks, even on small classes.

On the training data, 5-fold stratified cross-validation averaged a balanced accuracy of $0.8410 \pm 0.0132$. Balanced accuracy means the average recall over all classes. It gives a fair score with uneven groups. This beats the baseline and shows the algebraic features add real value.

We retrained on the full training set. The test set got a balanced accuracy of 0.8525. See Table~\ref{tab:class_report_non_s7} for details per group. Here the precision is the share of correct predictions for a group, how accurate when it picks that one. Recall is the share of true cases it finds how many real ones it catches. F1-score balances them as their harmonic mean. Scores are top-notch for $C_7 \rtimes C_3$. They're solid for cyclic and dihedral groups. But $A_7$ and $\mathrm{PSL}(3,2)$ still overlap some.

\begin{table}[htbp]
\centering
\caption{Classification Report for the Non-$S_7$ Model on the Test Set}
\label{tab:class_report_non_s7}
\begin{tabular}{lrrr}
\toprule
Galois Group & Precision & Recall & F1-Score \\
\midrule
$A_7$ & 0.7729 & 0.8850 & 0.8252 \\
$\mathrm{PSL}(3,2)$ & 0.7969 & 0.6374 & 0.7083 \\
$C_7 \rtimes C_3$ & 0.9995 & 0.9970 & 0.9983 \\
$D_7$ & 0.8274 & 0.8925 & 0.8587 \\
$C_7 \rtimes C_6$ & 0.8802 & 0.7823 & 0.8284 \\
$C_7$ & 0.9394 & 0.9208 & 0.9300 \\
\midrule
Macro Avg & 0.8694 & 0.8525 & 0.8581 \\
Weighted Avg & 0.8271 & 0.8254 & 0.8216 \\
\bottomrule
\end{tabular}
\end{table}

Table~\ref{tab:conf_matrix_non_s7} is the confusion matrix. It counts true labels against predicted ones. It shows main problems: 2,616 true $A_7$ called $\mathrm{PSL}(3,2)$, and 5,923 the other way. Outside this pair, errors are rare. This points to the model's skill at splitting off cyclic and dihedral groups. Those have tighter structures.

\begin{table}[htbp]
\centering
\caption{Confusion Matrix for the Non-$S_7$ Model on the Test Set (Rows: True, Columns: Predicted)}
\label{tab:conf_matrix_non_s7}
\begin{tabular}{lrrrrrr}
\toprule
 & $A_7$ & $\mathrm{PSL}(3,2)$ & $C_7 \rtimes C_3$ & $D_7$ & $C_7 \rtimes C_6$ & $C_7$ \\
\midrule
$A_7$ & 20,178 & 2,616 & 0 & 5 & 0 & 0 \\
$\mathrm{PSL}(3,2)$ & 5,923 & 10,447 & 1 & 19 & 1 & 0 \\
$C_7 \rtimes C_3$ & 3 & 21 & 9,754 & 3 & 0 & 2 \\
$D_7$ & 2 & 20 & 2 & 772 & 65 & 4 \\
$C_7 \rtimes C_6$ & 2 & 2 & 0 & 131 & 485 & 0 \\
$C_7$ & 0 & 3 & 2 & 3 & 0 & 93 \\
\bottomrule
\end{tabular}
\end{table}

For completeness, we also repeated the same pipeline with alternative
train--test splits of $50/50$ and $80/20$.  In classical machine learning
practice, an $80/20$ split is often the default choice, since it gives the
model as much training data as possible while still keeping a separate test
set.  In our case, all three splits produced very similar test balanced
accuracies, in a narrow range around $0.85$--$0.87$, with a slight gain when
more data was allocated to training.  We chose to report detailed results for
the $60/40$ split, which still follows this intuition that most of the data is
used for learning,  but it leaves a particularly large and stratified test set.
 The full training and evaluation pipeline is available in our public
repository~\cite{mezini2025mlgalois7repo}.

\section{Symbolic Neural Networks for Galois Group Classification}
\label{sec:symbolic-nn}

Our machine learning experiments pointed to a small set of features, the polynomial coefficients, aspects of the discriminant, and certain algebraic invariants, that prove especially useful in identifying Galois groups. Building on that observation, we now explore the idea of a symbolic neural network, which brings these features front and center in the model's design. Traditional neural networks deliver good results, yet they can feel opaque when it comes to how they incorporate mathematical knowledge. A symbolic network, on the other hand, pairs learning from data with explicit symbolic computations, offering greater clarity while drawing directly on what we know from the field.

These kinds of networks, often termed neuro-symbolic systems, aim to merge statistical patterns with logical structures. In disciplines like algebra and number theory, where symmetries and invariants are key, this combination makes a lot of sense. Classifying Galois groups isn't just a matter of crunching numbers; it involves recognizing the underlying symmetries in the roots. By hardwiring calculations for things like discriminants and resolvents, the network creates representations that tie back to core theory. 

Focusing on our irreducible monic septics over the rationals, the boosting model outcomes highlight the discriminant's sign, which is vital for even-permutation groups when square, its scaled logarithm, and modified $j_0$ to $j_4$ invariants as essential for sorting out confusions, like those between $A_7$ and $\mathrm{PSL}(3,2)$, or among cyclic and dihedral types. This shaped how we structured things: begin with pulling invariants algebraically from the coefficients, such as handling the discriminant and its features, then apply scaling via signed logs to keep numbers manageable. From there, layers mix in the raw data, perhaps using attention to pick out what's most relevant for group differences.

Putting it together, the input comes as the coefficient vector $(a_0, \dots, a_6, 1)$ for the monic form, including that leading one for an even eight entries. The network flows like this:

\begin{enumerate}[label=\arabic*.]
\item \textbf{Symbolic Preprocessing Layer}: Extracts core invariants through fixed formulas, the discriminant and $j_0$ to $j_4$ as septic-specific resolvents. Out comes a vector of these untreated values, grounding the model in quantities unchanged by root reordering.

\item \textbf{Feature Transformation Layer}: To cope with vast value ranges, each $x$ gets mapped to $\operatorname{sgn}(x) \cdot \log_{10}(1 + |x|)$, keeping signs for important qualities like square discriminants and taming sizes for reliable training.

\item \textbf{Concatenation Layer}: Blends the initial coefficients with these adjusted invariants into a 15-entry vector (eight coefficients, two discriminant parts, five tuned $j_i$). This setup allows balancing straightforward polynomial traits against theory-derived ones.

\item \textbf{Dense Neural Layers}: Two connected layers with 128 nodes, then 64, with ReLU each time, tease out complex nonlinear links, leaning on the symbolic base to spot what sets groups apart.

\item \textbf{Attention Layer}: A self-attention setup dynamically emphasizes features, say lifting the discriminant for parity tests or specific $j_i$ for subgroup clues, honing in on the essentials.

\item \textbf{Output Layer}: Softmax across six nodes yields probabilities for non-$S_7$ groups: $A_7$, $\mathrm{PSL}(3,2)$, $C_7 \rtimes C_3$, $D_7$, $C_7 \rtimes C_6$, and $C_7$. We train with weighted cross-entropy to level the playing field for scarcer groups, in line with past methods.
\end{enumerate}

Training sticks to our earlier approach of stratified splits and balancing, but the symbolic elements open doors for insight perhaps tracing attention flows to see which invariants sway decisions in tricky spots like $A_7$ against $\mathrm{PSL}(3,2)$.

In essence, this framework extends our boosting work while boosting readability, applied to the AIMS-7 data, it could offer rough symbolic shortcuts for classifications, possibly prompting fresh ideas in theory. It's still in early stages, but layering on features like Frobenius cycles modulo primes might make it a go-to helper for algebraic studies. Further out, the approach could reach into higher degrees, using symbolic anchors to navigate denser Galois landscapes.

\section{Extending to Higher-Degree Polynomials}
\label{sec:conclusions}
The work presented here centers on irreducible monic polynomials of degree~7 over $\mathbb{Q}$, a range where the transitive subgroups of $S_7$ are fully cataloged, Galois groups can be computed reliably, and invariants such as $j_0,\dots,j_4$ are available in explicit form. This setting allows us to build and test the symbolic neural network in Section~\ref{sec:symbolic-nn} with some confidence, using computer algebra to check every label and every invariant.

Pushing to higher degrees reveals clear limits on both the Galois and the invariant side. For Galois computations, methods \cite{cohen} have no theoretical degree cap and are implemented in systems such as PARI/GP, and SageMath \cite{sagemath}. However, PARI/GP and SageMath which uses PARI/GP by default, that support this computation for polynomials up to degree 11 . It will require the galdata package for degrees 8–11, which is included in SageMath. For degrees higher than 11, SageMath does not have a built-in open-source method to reliably identify the transitive group without external software like Magma.  Attempts here may fail due to computational limits, such as memory or time constraints when trying to compute the Galois closure explicitly. In practice, the running time and memory usage increase rapidly with the degree and with the size of the discriminant. Computing $\mathrm{Gal}(f/\mathbb{Q})$ for large random polynomials of degree $n \ge 10$ is already costly, and scaling up to build datasets on the scale of AIMS-7 in degrees $12$ or $15$ would require substantial resources. Beyond that, routine checks on large families of random polynomials become impractical.

Invariant theory faces similar issues. For binary forms, explicit minimal generating sets for the invariant rings are known in degrees up to $10$; see Curri \cite{curri2021} and the references therein for a unified account of the cases $3 \le d \le 10$. For larger $d$ one still has Hilbert’s finiteness theorem, but no comparably concrete list of generators is available. In that regime the clean flow
\[
  \text{coefficients} \;\longrightarrow\; \text{explicit invariants} \;\longrightarrow\; \text{symbolic network}
\]
breaks down.

These constraints, however, suggest a direction rather than an endpoint. Degree~7 serves as a controlled proving ground, guiding extensions to degrees $8$ through $10$, where invariants are still accessible and Galois algorithms remain viable. One can imagine training tailored networks on carefully chosen families in these degrees and tracking which combinations of coefficients and low-degree invariants are actually used in the classification.

For higher degrees, where labelled data are sparse and full invariant rings are not known explicitly, the role of the network changes from classifier to exploratory tool. By examining the weights on coefficient patterns or on approximate invariants, one may be able to pick out expressions that consistently separate groups in examples and therefore deserve a theoretical explanation. The aim is not to replace Galois group algorithms, but to supplement them with heuristic guidance. With combinations of coefficients that behave like resolvents for certain subgroups and that might eventually be understood in purely algebraic terms.

In short, septics are not the end of the story, they are a launchpad. Degree~7 is one of the last places where both the Galois side and the invariant side are still explicit enough that one can attach a clear algebraic meaning to each layer of a symbolic network. Extending even modestly beyond this range could help connect heavy computational tools with broader classification results, and turn the present limitations into questions for future work.

\subsection{Statistical analysis of the non-\texorpdfstring{$S_7$}{S7} database}

In this subsection we keep the same non-$S_7$ database and notation as above.
All plots are functions of the logarithmic height $h(f)=\log_{10}(H(f)+1)$
and $\log_{10}|\Delta(f)|$.

\subsubsection{Distribution of Galois groups}

The overall class frequencies are shown in
Figure~\ref{fig:group-distribution-pie}.  One immediately sees that the
simple groups $A_7$ and $\mathrm{PSL}(3,2)$ occupy most of the disk, while
the smaller groups $C_7$, $C_7\rtimes C_3$, $C_7\rtimes C_6$ and $D_7$
contribute only thin slices.  Thus, even after removing the $S_7$-part, the
dataset is strongly biased toward the two “generic’’ simple groups.

\begin{figure}[t]
  \centering
  \includegraphics[width=0.55\textwidth]{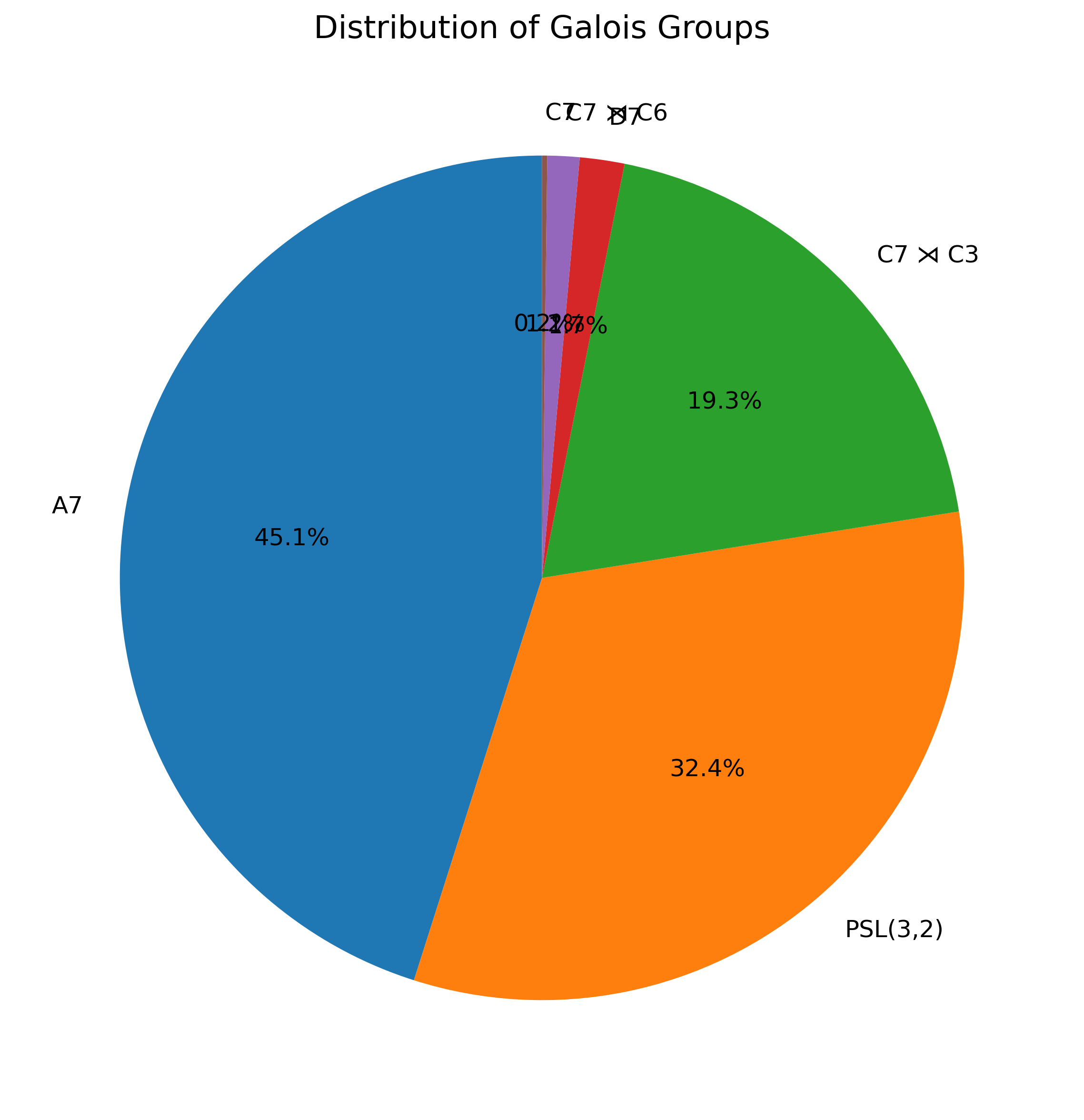}
  \caption{Relative frequencies of the six non-$S_7$ Galois groups.}
  \label{fig:group-distribution-pie}
\end{figure}

\subsubsection{Log-height distributions by group}

The empirical distributions of $h(f)$ for each Galois group are given in
Figure~\ref{fig:height-dist}.  The six coloured histograms show that the
groups live on very different horizontal ranges.  The distributions for
$A_7$ and $\mathrm{PSL}(3,2)$ are narrow and concentrated between roughly
$h\approx 1$ and $h\approx 3$, whereas the distributions for $C_7$ and
$C_7\rtimes C_3$ are shifted far to the right and have long tails extending
beyond $h\approx 10$.

The same phenomenon is visible in the boxplots of
Figure~\ref{fig:height-boxplots}.  Here the vertical displacement of the
boxes encodes the differences between groups: the median log-height for
$C_7\rtimes C_3$ lies high above that of $A_7$, and the interquartile ranges
do not overlap.  In particular, the order of the medians is
\[
  C_7\rtimes C_3 \;\gg\; C_7 \;>\; D_7 \approx C_7\rtimes C_6
  \;>\; \mathrm{PSL}(3,2) \gtrsim A_7.
\]
Figure~\ref{fig:mean-height-by-group} summarises this information in a
single bar plot: the mean log-height $\mathbb{E}[h(f)\mid G]$ decreases
steadily as we move from $C_7\rtimes C_3$ and $C_7$ to $A_7$.

\begin{figure}[t]
  \centering
  \includegraphics[width=0.8\textwidth]{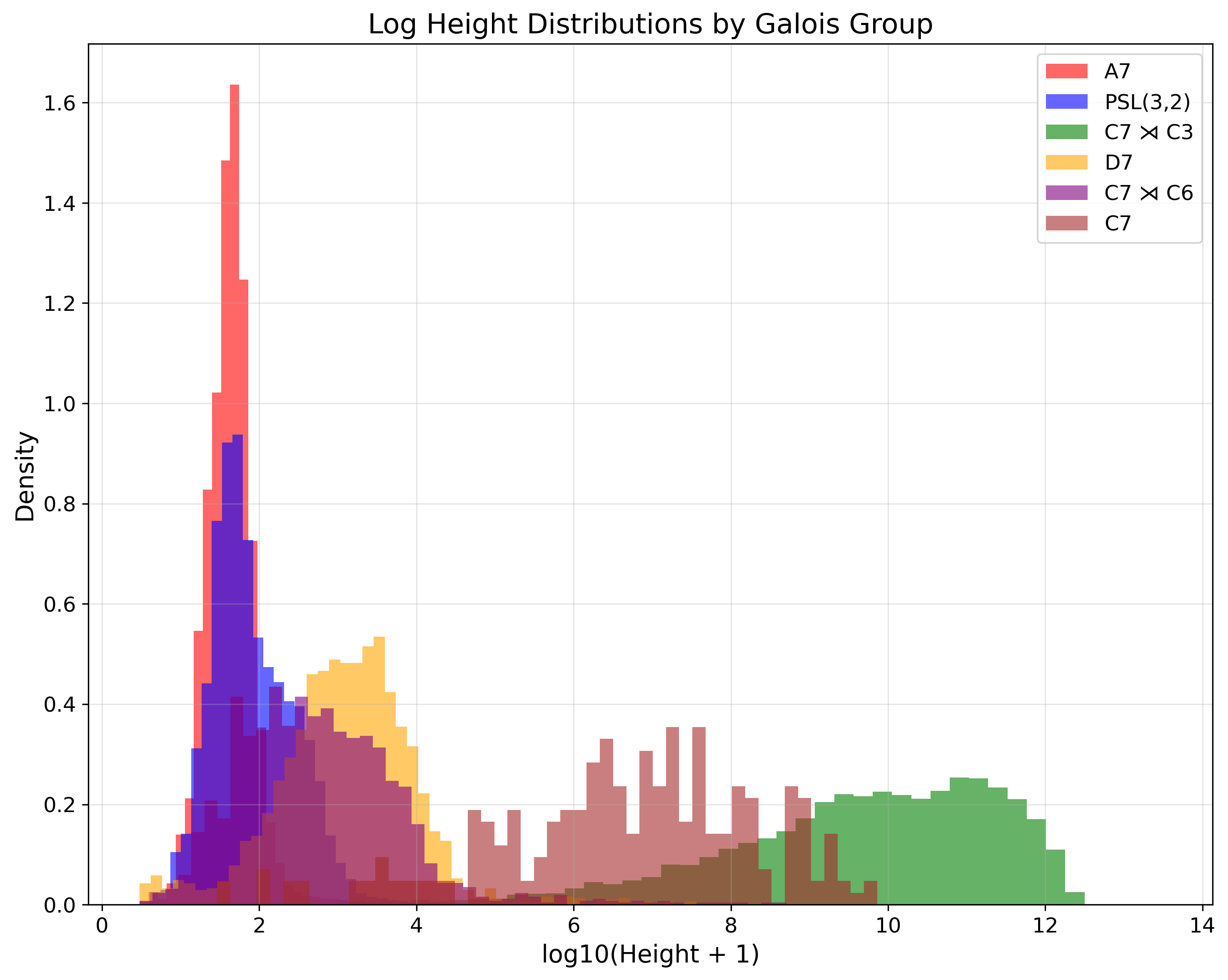}
  \caption{Empirical distributions of the log-height $h(f)$ for each
    non-$S_7$ Galois group.}
  \label{fig:height-dist}
\end{figure}

\begin{figure}[t]
  \centering
  \includegraphics[width=0.6\textwidth]{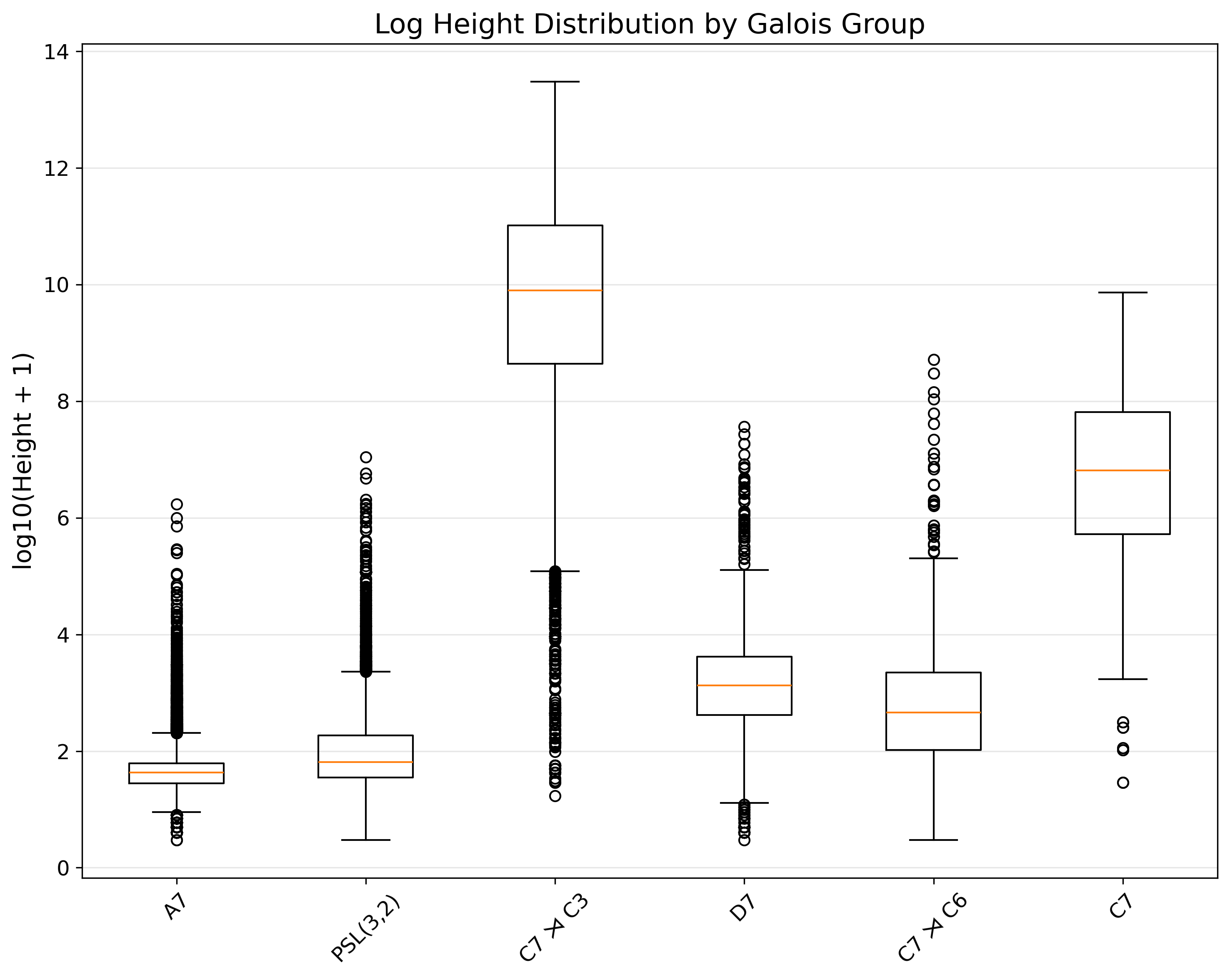}
  \caption{Boxplots of $h(f)$ by Galois group.  Each group occupies its own
   characteristic height range.}
  \label{fig:height-boxplots}
\end{figure}

\begin{figure}[t]
  \centering
  \includegraphics[width=0.6\textwidth]{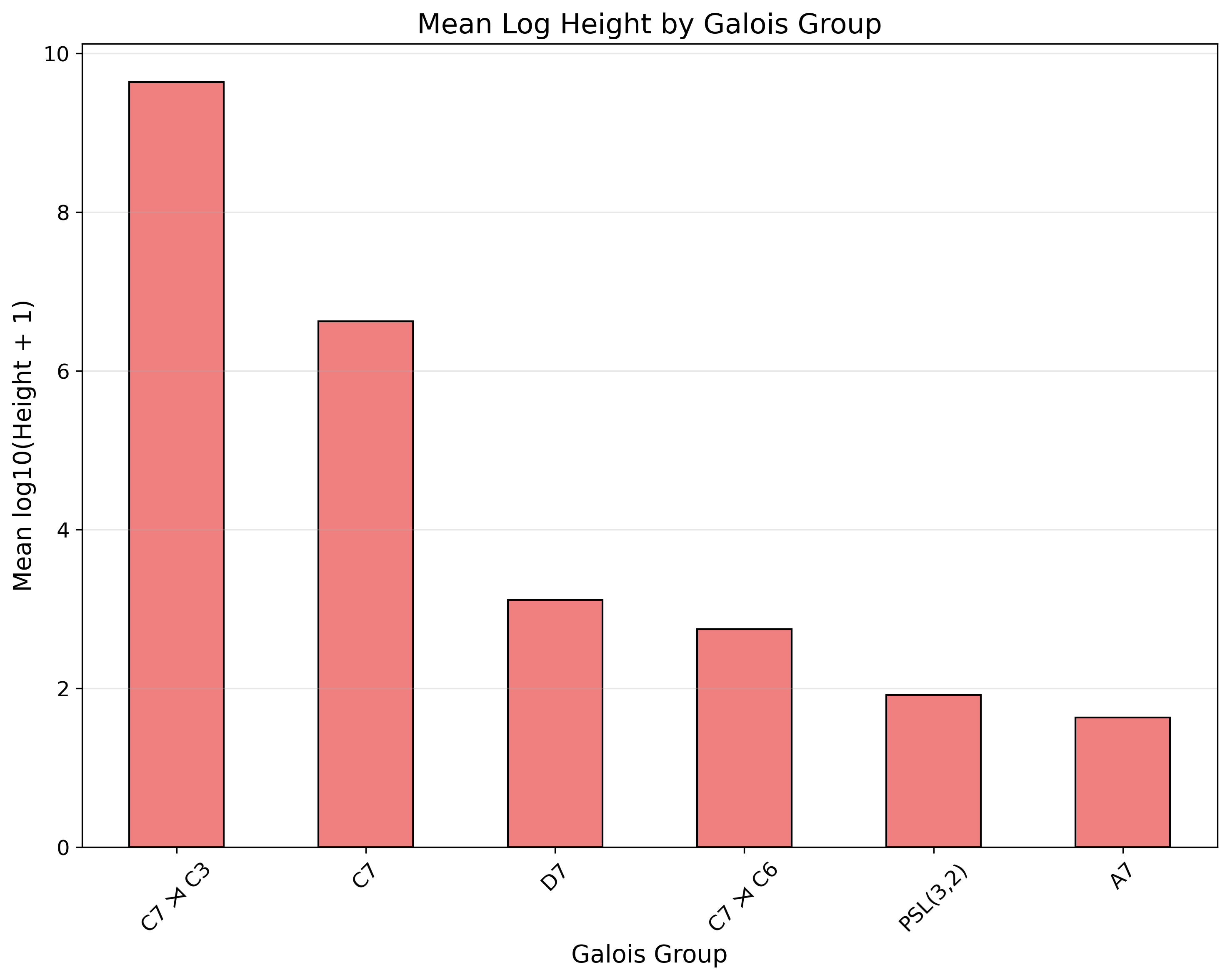}
  \caption{Mean log-height $\mathbb{E}[h(f)\mid G]$ for each
    non-$S_7$ Galois group.}
  \label{fig:mean-height-by-group}
\end{figure}

\subsubsection{Log-height versus group complexity}

To place the six groups on a single horizontal scale we use the complexity
index $c(G)\in\{1,\dots,6\}$ defined earlier, ordered from $C_7$ up to $A_7$.
Figure~\ref{fig:height-vs-complexity-scatter} shows the point cloud
\[
   (c(G(f)),\, h(f))
\]
for all polynomials in the database.  Each vertical column corresponds to
one group.  The tallest columns occur at $c(G)=1$ and $2$ (for $C_7$ and
$C_7\rtimes C_3$), and the columns get visibly shorter as one moves towards
$c(G)=5$ and $6$ (for $\mathrm{PSL}(3,2)$ and $A_7$).

The mean behaviour is extracted in
Figure~\ref{fig:mean-height-vs-complexity}, where we plot the average
log-height at each complexity level.  The curve starts high at $c=1,2$,
drops sharply at $c=3$, and then decreases slowly up to $c=6$.  Thus, in our
data, greater group complexity is associated with smaller defining height,
which is the opposite of the naive expectation that “more complicated groups
should require more complicated polynomials.’’

\begin{figure}[t]
  \centering
  \includegraphics[width=0.6\textwidth]{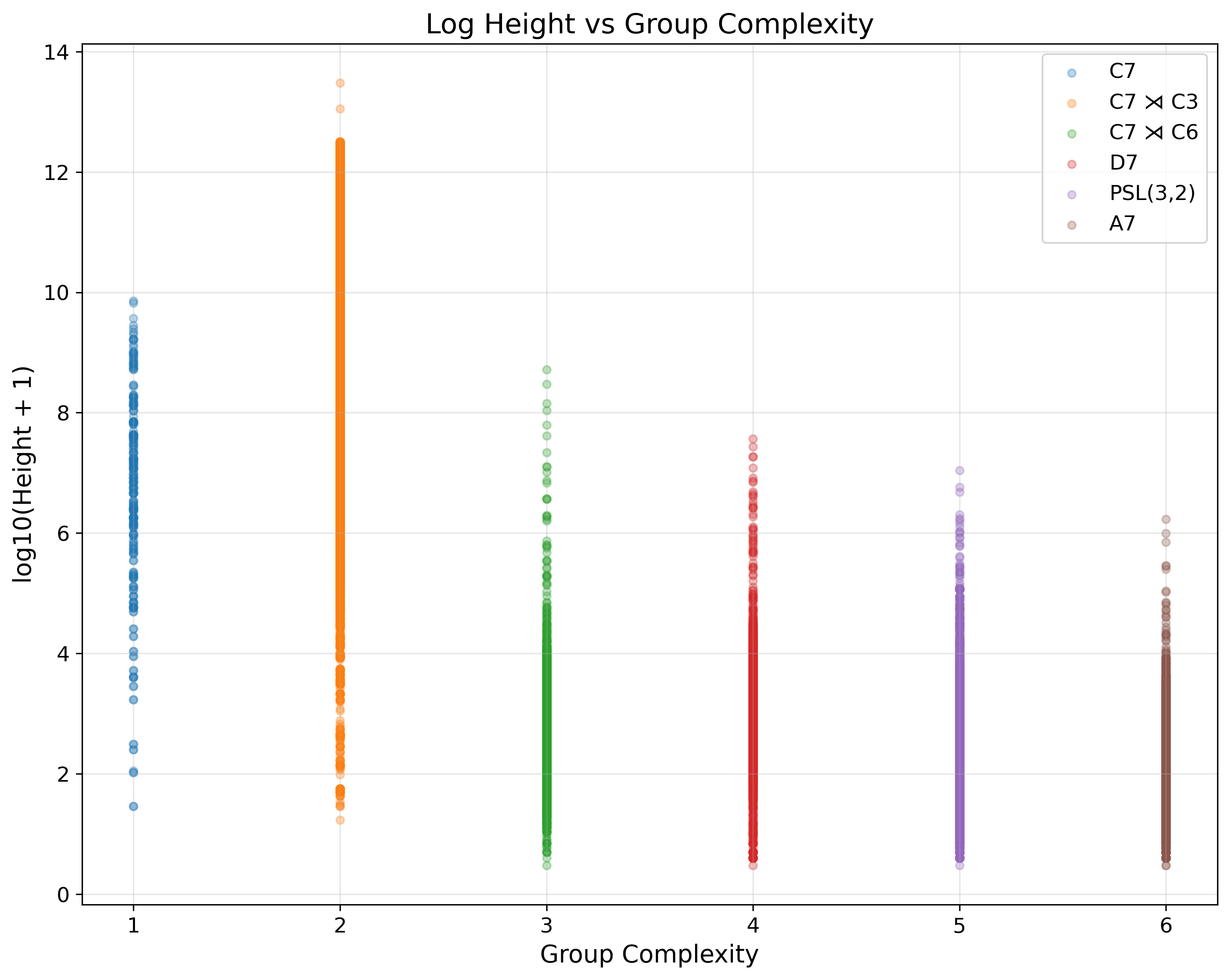}
  \caption{Scatter plot of $h(f)$ versus the complexity index $c(G)$.
   Each vertical strip corresponds to one Galois group.}
  \label{fig:height-vs-complexity-scatter}
\end{figure}

\begin{figure}[t]
  \centering
  \includegraphics[width=0.6\textwidth]{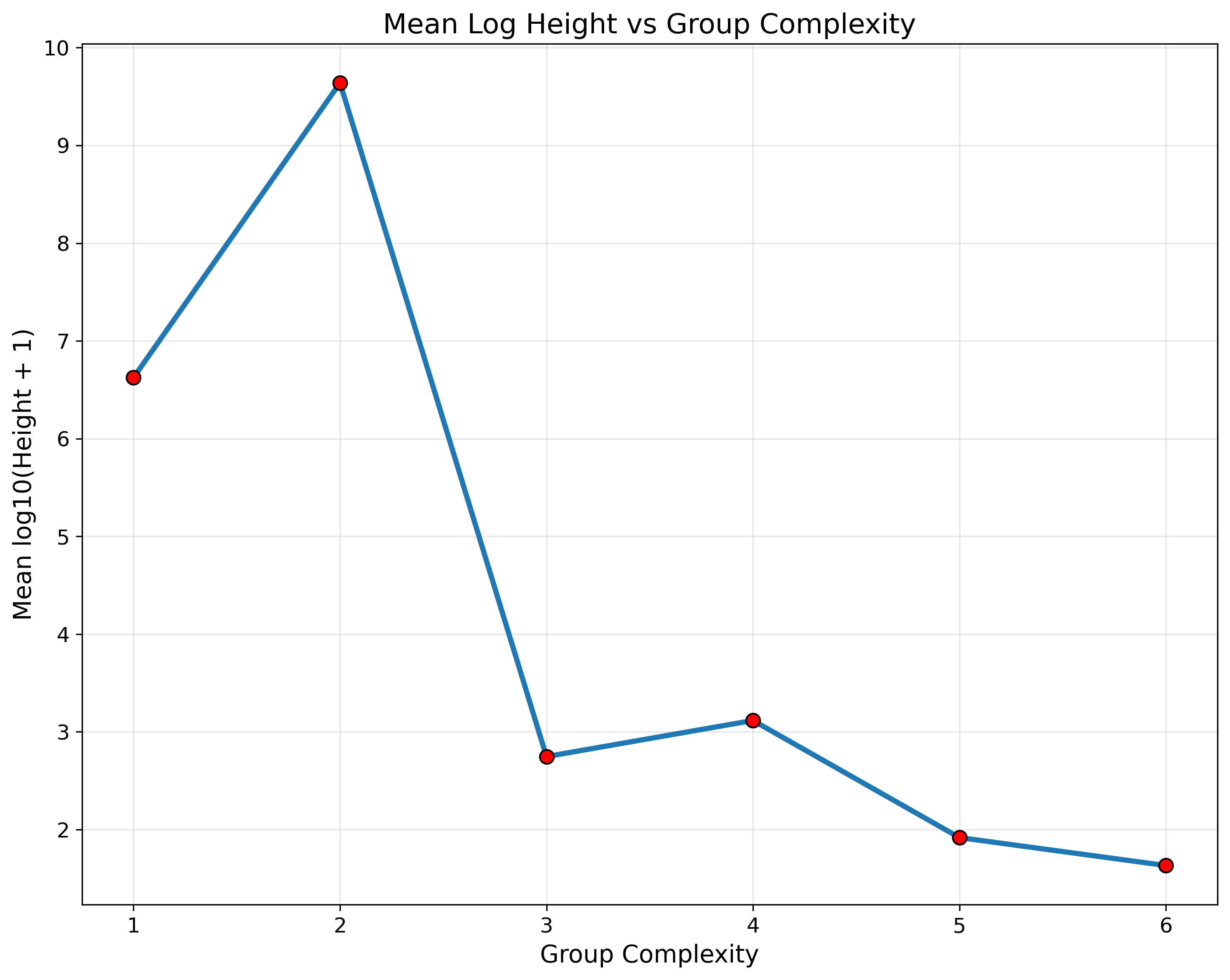}
  \caption{Mean log-height as a function of $c(G)$.  The decreasing curve
   shows that higher group complexity is realised by smaller heights in this
   database.}
  \label{fig:mean-height-vs-complexity}
\end{figure}

\subsubsection{Discriminant statistics and correlation with height}

The histogram in Figure~\ref{fig:disc-dist} shows the distribution of
$\log_{10}|\Delta(f)|$ for the non-$S_7$ database.  Most mass lies between
$10$ and $20$, with a visible tail of polynomials whose discriminant
magnitude is several orders of magnitude larger.

Finally, Figure~\ref{fig:height-vs-disc} plots the pairs
\[
  \bigl(h(f),\, \log_{10}|\Delta(f)|\bigr)
\]
and colours each point by its Galois group.  The cloud has a clear diagonal
shape: large heights tend to come with large discriminants.  The colour
structure matches what we observed before: the $C_7$ and $C_7\rtimes C_3$
points populate the region with both large height and large discriminant,
whereas the $A_7$ and $\mathrm{PSL}(3,2)$ points are concentrated near the
lower-left corner.

\begin{figure}[h]
  \centering
  \includegraphics[width=0.6\textwidth]{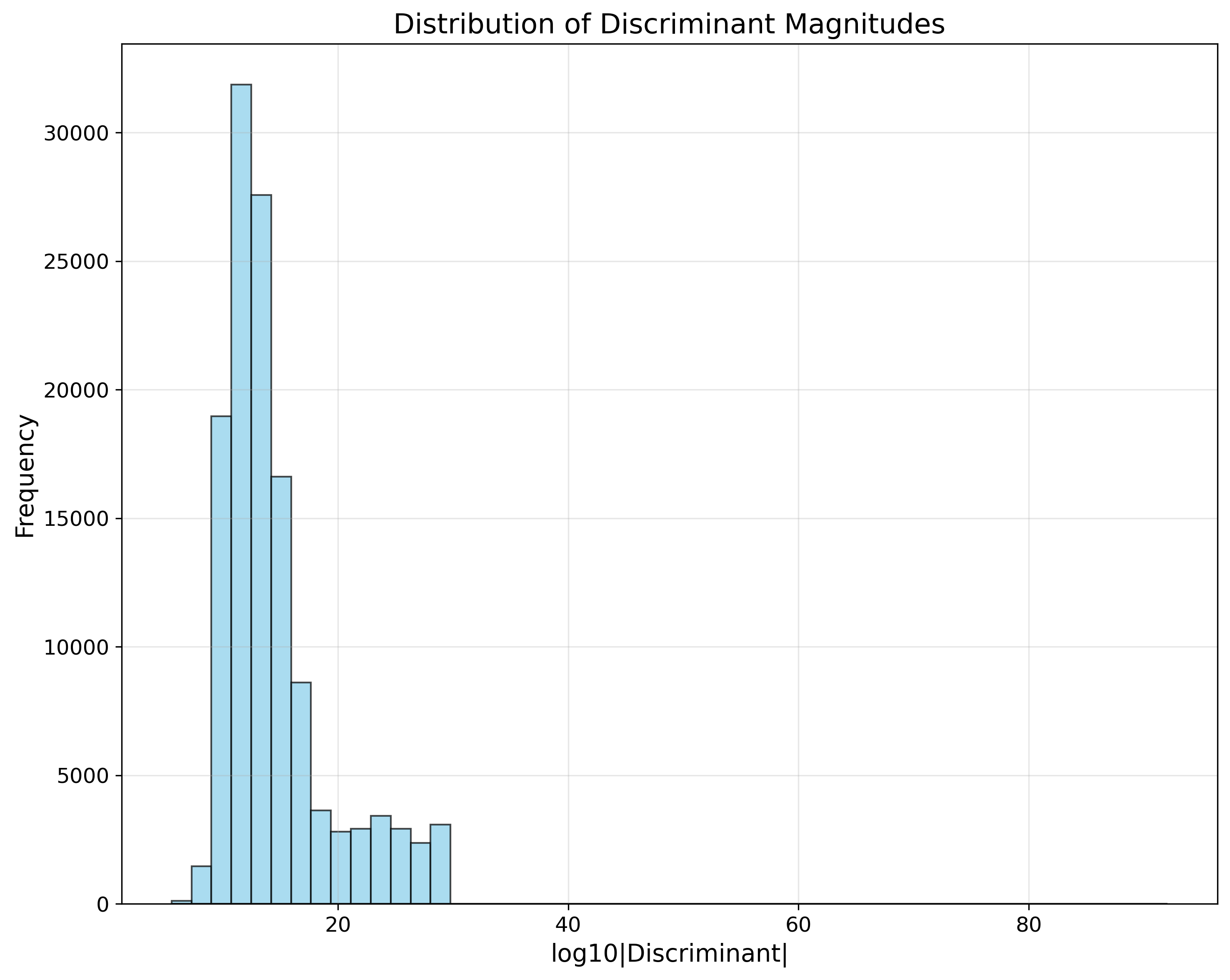}
  \caption{Histogram of $\log_{10}|\Delta(f)|$ for the non-$S_7$ database.}
  \label{fig:disc-dist}
\end{figure}

\begin{figure}[h]
  \centering
  \includegraphics[width=0.6\textwidth]{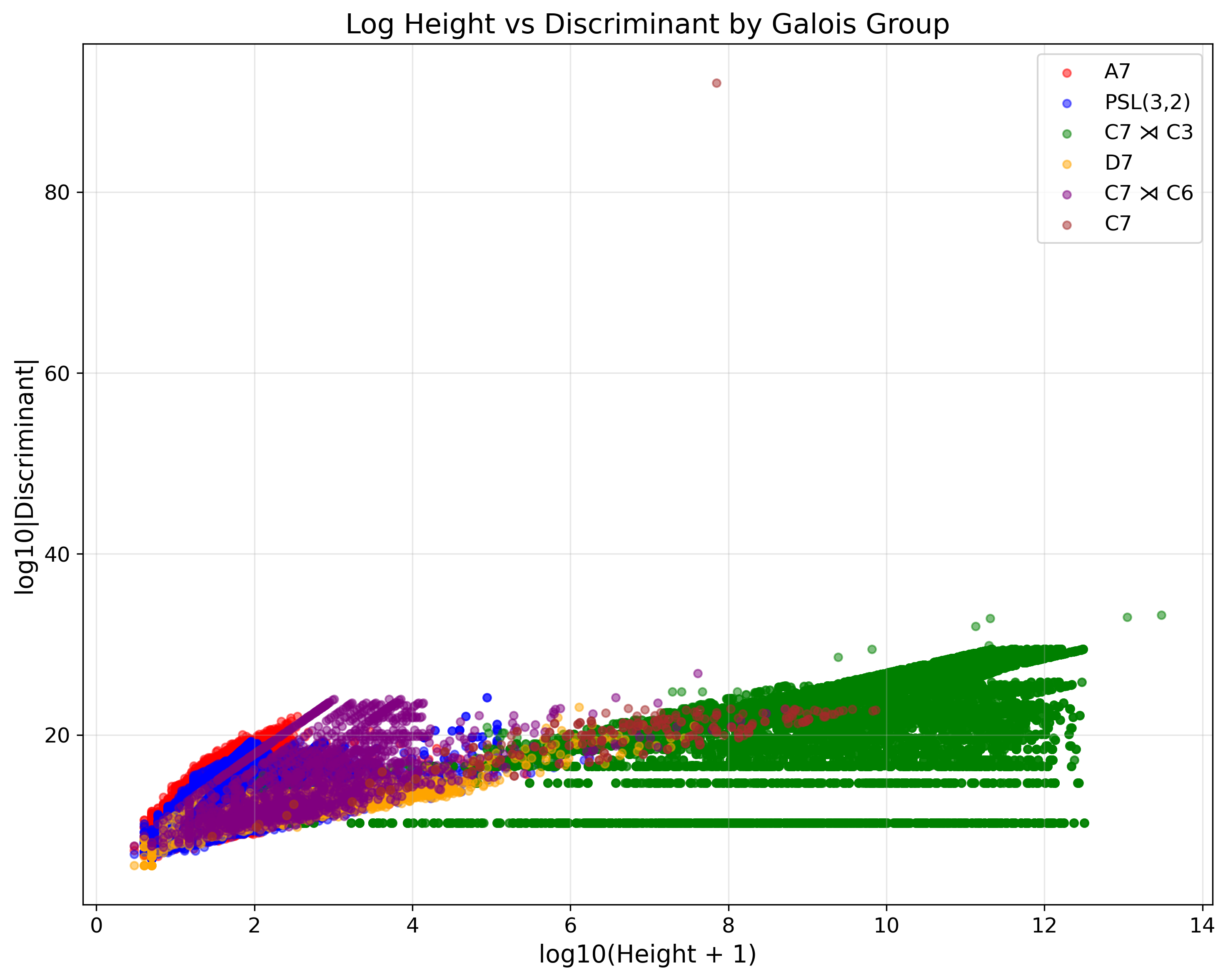}
  \caption{Joint plot of $h(f)$ versus $\log_{10}|\Delta(f)|$, coloured by
   Galois group. Larger heights tend to be accompanied by larger
   discriminants.}
  \label{fig:height-vs-disc}
\end{figure}
All statistical plots and clustering analyses in this section were produced
with our public implementation~\cite{mezini2025statsgalois7repo}.

\section{Concluding Remarks}

This work introduces a scalable computational framework for the classification of Galois groups associated to irreducible degree-7 polynomials over~$\Q$. By combining classical tools such as resolvent factorizations and invariant theory with modern data-driven methods, we construct a reproducible dataset of over one million septics and demonstrate that algebraic invariants derived from transvections can significantly improve classification performance, particularly for rare solvable groups.

The resulting neurosymbolic model illustrates the effectiveness of hybrid approaches that integrate symbolic algebraic features with supervised learning, offering improved interpretability over purely black-box classifiers. The empirical analysis highlights both the utility and limitations of current methods, especially under class imbalance induced by the natural distribution of Galois groups under height constraints.

Several avenues for further research emerge from this study. First, the methods developed here can be extended to polynomials of higher degrees, where the classification problem becomes substantially more complex and current resolvent constructions may require further refinement or approximation. Second, the structure and distribution of Galois groups in families defined by different height or shape conditions remains an open area of empirical investigation. Third, the combination of this framework with arithmetic geometry particularly the study of moduli spaces and Hurwitz covers may enable more systematic realizations of specific groups as Galois groups over~$\Q$.

From a computational perspective, future work may explore the integration of unsupervised learning techniques, the generation of synthetic rare group examples, and the refinement of invariant-based feature engineering. The database constructed in this paper provides a foundation for such extensions and may also serve as a benchmark for future work on the Inverse Galois Problem in both theoretical and experimental contexts.

\bibliographystyle{amsplain}
\bibliography{2025-1.bib}

\appendix
\section{Coefficients of the degree 35 resolvent}\label{app:a}
\begin{tiny}
\[
\begin{split}
e_1 &= -15 a_6, \\
e_2 &=   105 a_6^2 + 10 a_5, \\
e_3 &=    = -455 a_6^3 + 150 a_6 a_5 + \frac{89}{3} a_4, \\
e_4 & = 1365 a_6^4 - 980 a_6^2 a_5 - \frac{80}{3} a_5^2 - \frac{596}{3} a_6 a_4 + \frac{56}{3} a_3, \\
e_5 &=   -4095 a_6^5 + 3675 a_6^3 a_5 + 200 a_6^2 a_4 - 900 a_6 a_5^2 - 56 a_5 a_4 - \frac{2972}{5} a_6 a_3 + \frac{356}{5} a_2, \\
e_6 &=  12285 a_6^6 - 14175 a_6^4 a_5 - 910 a_6^3 a_4 + 5775 a_6^2 a_5^2 + 672 a_6 a_5 a_4 + 80 a_5^3 + 1660 a_6^2 a_3 \\
& - 672 a_5 a_3 - 252 a_6 a_2 + \frac{1068}{5} a_1, \\
e_7 &=   = -36855 a_6^7 + 49665 a_6^5 a_5 + 3185 a_6^4 a_4 - 28350 a_6^3 a_5^2 - 1820 a_6^2 a_5 a_4 - 700 a_6 a_5^3 \\
& - 6146 a_6^3 a_3 + 2352 a_6 a_5 a_3 + 756 a_5^2 a_4 + 756 a_6^2 a_2 - 1512 a_5 a_2 - 504 a_6 a_1 + 252 a_0, \\
e_8 & = 110565 a_6^8 - 171990 a_6^6 a_5 - 11102 a_6^5 a_4 + 127575 a_6^4 a_5^2 + 6370 a_6^3 a_5 a_4 + 4550 a_6^2 a_5^3 \\
& + 22386 a_6^4 a_3 - 8190 a_6^2 a_5 a_3 - 2520 a_6 a_5^2 a_4 - 336 a_5^2 a_3 - 2646 a_6^3 a_2 + 6048 a_6 a_5 a_2 \\
& + 672 a_5 a_4 a_3 + 1008 a_6^2 a_1 - 2016 a_5 a_1 - 672 a_6 a_0 + 336 a_4, \\
e_9 &= -331695 a_6^9 + 589680 a_6^7 a_5 + 38857 a_6^6 a_4 - 552825 a_6^5 a_5^2 - 22295 a_6^4 a_5 a_4 - 22750 a_6^3 a_5^3 \\
& - 83265 a_6^5 a_3 + 28665 a_6^3 a_5 a_3 + 8820 a_6^2 a_5^2 a_4 + 1176 a_5^3 a_4 + 9261 a_6^4 a_2 - 21168 a_6^2 a_5 a_2 \\
& - 2352 a_6 a_5^2 a_3 - 2352 a_5^2 a_2 - 3528 a_6^3 a_1 + 7056 a_6 a_5 a_1 + 2352 a_6^2 a_0 - 1176 a_5 a_0 - 1176 a_4 a_3, \\
e_{10} &= 995085 a_6^{10} - 1986210 a_6^8 a_5 - 136059 a_6^7 a_4 + 2284590 a_6^6 a_5^2 + 77945 a_6^5 a_5 a_4 + 108745 a_6^4 a_5^3 \\
& + 299691 a_6^6 a_3 - 100275 a_6^4 a_5 a_3 - 30870 a_6^3 a_5^2 a_4 - 4116 a_6^2 a_5^3 a_3 - 32487 a_6^5 a_2 + 74172 a_6^3 a_5 a_2 \\
& + 8232 a_6^2 a_5^2 a_3 + 8232 a_6 a_5^3 a_2 + 12348 a_6^4 a_1 - 24696 a_6^2 a_5 a_1 - 8232 a_6^3 a_0 + 4116 a_6 a_5 a_0 + 4116 a_4 a_2, \\
e_{11} &= -2985255 a_6^{11} + 6552150 a_6^9 a_5 + 476721 a_6^8 a_4 - 9088350 a_6^7 a_5^2 - 272657 a_6^6 a_5 a_4 \\
& - 503685 a_6^5 a_5^3 - 1049565 a_6^7 a_3 + 351232 a_6^5 a_5 a_3 + 108045 a_6^4 a_5^2 a_4 + 14406 a_6^3 a_5^3 a_4 \\
& + 113695 a_6^6 a_2 - 259602 a_6^4 a_5 a_2 - 28824 a_6^3 a_5^2 a_3 - 28824 a_6^2 a_5^3 a_2 - 43236 a_6^5 a_1 \\
& + 86472 a_6^3 a_5 a_1 + 28824 a_6^4 a_0 - 14412 a_6^2 a_5 a_0 - 14412 a_4 a_1, \\
e_{12} &= 8955765 a_6^{12} - 21290850 a_6^{10} a_5 - 1670523 a_6^9 a_4 + 34959450 a_6^8 a_5^2 + 954799 a_6^7 a_5 a_4 \\
& + 2277735 a_6^6 a_5^3 + 3673455 a_6^8 a_3 - 1228292 a_6^6 a_5 a_3 - 378162 a_6^5 a_5^2 a_4 - 50421 a_6^4 a_5^3 a_4 \\
& - 398432 a_6^7 a_2 + 909107 a_6^5 a_5 a_2 + 100884 a_6^4 a_5^2 a_3 + 100884 a_6^3 a_5^3 a_2 + 151326 a_6^6 a_1 \\
& - 302652 a_6^4 a_5 a_1 - 100884 a_6^5 a_0 + 50442 a_6^3 a_5 a_0 + 50442 a_4 a_0, \\
e_{13} &= -26867295 a_6^{13} + 68068350 a_6^{11} a_5 + 5851833 a_6^{10} a_4 - 131135850 a_6^9 a_5^2 - 3345797 a_6^8 a_5 a_4 \\
& - 10110195 a_6^7 a_5^3 - 12857025 a_6^9 a_3 + 4299992 a_6^7 a_5 a_3 + 1323567 a_6^6 a_5^2 a_4 + 176475 a_6^5 a_5^3 a_4 \\
& + 1394512 a_6^8 a_2 - 3183877 a_6^6 a_5 a_2 - 353094 a_6^5 a_5^2 a_3 - 353094 a_6^4 a_5^3 a_2 - 529641 a_6^7 a_1 \\
& + 1059282 a_6^5 a_5 a_1 + 353094 a_6^6 a_0 - 176547 a_6^4 a_5 a_0 - 176547 a_3 a_0, \\
e_{14} &= 80601885 a_6^{14} - 214693950 a_6^{12} a_5 - 20481423 a_6^{11} a_4 + 485099250 a_6^{10} a_5^2 + 11709787 a_6^9 a_5 a_4 \\
& + 44244225 a_6^8 a_5^3 + 44999575 a_6^{10} a_3 - 15049972 a_6^8 a_5 a_3 - 4632782 a_6^7 a_5^2 a_4 - 617703 a_6^6 a_5^3 a_4 \\
& - 4880792 a_6^9 a_2 + 11143567 a_6^7 a_5 a_2 + 1235829 a_6^6 a_5^2 a_3 + 1235829 a_6^5 a_5^3 a_2 + 1853743 a_6^8 a_1 \\
& - 3707486 a_6^6 a_5 a_1 - 1235829 a_6^7 a_0 + 617914 a_6^5 a_5 a_0 + 617914 a_2 a_0 \\
e_{15} &= -241805655 a_6^{15} + 671458650 a_6^{13} a_5 + 71694981 a_6^{12} a_4 - 1771058250 a_6^{11} a_5^2 - 40984277 a_6^{10} a_5 a_4 \\ 
&- 190672575 a_6^9 a_5^3 - 157498512 a_6^{11} a_3 + 52674932 a_6^9 a_5 a_3 + 16204837 a_6^8 a_5^2 a_4 + 2160636 a_6^7 a_5^3 a_4 \\ 
&+ 17082772 a_6^{10} a_2 - 39002537 a_6^8 a_5 a_2 - 4325406 a_6^7 a_5^2 a_3 - 4325406 a_6^6 a_5^3 a_2 - 6488109 a_6^9 a_1 \\ 
&+ 12976218 a_6^7 a_5 a_1 + 4325406 a_6^8 a_0 - 2162703 a_6^6 a_5 a_0 - 2162703 a_1 a_0, \\
e_{16} &= 725416965 a_6^{16} - 2076145350 a_6^{14} a_5 - 250882923 a_6^{13} a_4 + 6393959250 a_6^{12} a_5^2 + 143394719 a_6^{11} a_5 a_4 \\
 &+ 814614675 a_6^{10} a_5^3 + 550745292 a_6^{12} a_3 - 184062332 a_6^{10} a_5 a_3 - 56716807 a_6^9 a_5^2 a_4 - 7562241 a_6^8 a_5^3 a_4 \\ 
 &- 59789752 a_6^{11} a_2 + 136508867 a_6^9 a_5 a_2 + 15138921 a_6^8 a_5^2 a_3 + 15138921 a_6^7 a_5^3 a_2 + 22708381 a_6^{10} a_1 \\
 &- 45416762 a_6^8 a_5 a_1 - 15138921 a_6^9 a_0 + 7569459 a_6^7 a_5 a_0 + 7569459 a_0^2, \\
\end{split}
\]
 \end{tiny}

\end{document}

\section{Algorithm and Sage implementation}\label{app:b}
The routine below loops over primes $p\equiv 1\pmod 7$ up to a user-specified bound $B$, constructs the period $\eta$ from the index-$7$ subgroup
$H\subset(\mathbb{Z}/p\mathbb{Z})^\times$, and outputs $f_p(x)=minpoly_{\Q}(\eta)$. By construction, each $f_p$ is irreducible of degree $7$
with $\Gal(f_p)\cong C_7$.
\begin{verbatim}
# SageMath: degree-7 C7 polynomials via Gaussian periods (no subfields() calls)
from sage.all import *
import csv

def degree7_subfield_polynomial_via_period(p):
    '''
    Minimal polynomial over Q of a Gaussian period generating
    the unique degree-7 subfield of Q(zeta_p), for p \equiv 1 (mod 7).
    '''
    if not (is_prime(p) and p \% 7 == 1):
        raise ValueError(f"p must be prime with p \equiv 1 mod 7; got p={p}")

    K = CyclotomicField(p)   # Q(\zeta_p)
    z = K.gen()

    # g is a primitive root mod p, i.e. generator of (Z/pZ)^\times
    g_mod = Integers(p).multiplicative_generator()
    g = int(g_mod)
    m = (p - 1) // 7         # |H| where H=<g^7>

    # Representatives of H itself (the coset j=0)
    exps = [pow(g, 7*k, p) for k in range(m)]  # elements of H in {1,...,p-1}

    # Gaussian period for the coset H; any coset gives the same subfield
    eta = sum(z**e for e in exps)

    # Minimal polynomial over Q with integer coefficients
    f = eta.minpoly().change_ring(QQ)
    return f

def generate_C7_polynomials(prime_bound, quiet=False):
    '''
    For primes p \le prime_bound with p \equiv 1 (mod 7), return
    (p, polynomial, coefficient_tuple) for the C7-defining polynomials.
    '''
    results = []
    for p in primes(prime_bound):
        if p % 7 != 1:
            continue
        if not quiet:
            print(f"Processing p={p} ...", flush=True)
        try:
            f = degree7_subfield_polynomial_via_period(p)
            coeffs = tuple(f.coefficients(sparse=False))
            results.append((p, f, coeffs))
            if not quiet:
                print(f"  found degree-7 C7 polynomial: {f}")
        except Exception as e:
            print(f"  skipped p={p} due to error: {e}")
    return results

# -------- Example run --------
polynomial_data = generate_C7_polynomials(50000, quiet=False)

# Save to CSV
with open("C7_polynomials.csv", "w", newline="") as csvfile:
    writer = csv.writer(csvfile)
    writer.writerow(["p", "Polynomial", "Coefficient Tuple"])
    for p, poly, coeff_tuple in polynomial_data:
        writer.writerow([p, str(poly), coeff_tuple])

print(f"Generated {len(polynomial_data)} degree-7 polynomials with Galois group C7.")
\end{verbatim}
 

\section{Sage implementation (single file)}\label{app:c}
Below is a single-file Sage script that constructs candidates for the Galois group \emph{C7 semiproduct C3}, sieves them by modular factorization types \([7]\) and \([1,3,3]\), and (optionally) certifies exactly over \(\Q\).

\begin{lstlisting}[style=sagepy,caption={C7 semiproduct C3 septic generator (one file)}]
# ===================== C7 semiproduct C3 SEPTIC GENERATOR --- ONE FILE =====================
# Paste this whole cell into a fresh Sage session (spaces only; no tabs).

from sage.all import *

# Dickson polynomials: D_0=2, D_1=x, D_n = x*D_{n-1} - a*D_{n-2}
def dickson_D(n, a):
    R.<x> = QQ[]
    if n == 0: return R(2)
    if n == 1: return x
    Dm2, Dm1 = R(2), x
    for _ in range(2, n+1):
        Dk = x*Dm1 - R(a)*Dm2
        Dm2, Dm1 = Dm1, Dk
    return Dm1.change_ring(ZZ)

# Flip+scale: F(x) = x^deg * f(scale/x), content-normalized, positive leading coefficient
def flip_scale(f, scale=1):
    R = f.parent(); x = R.gen(); d = f.degree()
    coeffs = f.list()  # ascending [a0,...,ad]
    F = R(0); c = ZZ(scale)
    for k, ak in enumerate(coeffs):
        F += ZZ(ak) * (c**k) * (x**(d - k))
    if F != 0:
        cont = gcd(F.list())
        if cont: F //= cont
        if F.leading_coefficient() < 0: F = -F
    return F

# Frobenius sieve for C7 semiproduct C3: look for [1,3,3] and [7] modulo small primes
def frobenius_sieve_C7sC3(f, primes=(5,7,11,13,17,19,23,29)):
    if f.degree() != 7 or not f.is_irreducible():
        return (False, False)
    disc = ZZ(f.discriminant())
    saw133 = False; saw7 = False
    for p in primes:
        if disc % p == 0:
            continue
        parts = sorted(g[0].degree() for g in f.change_ring(GF(p)).factor())
        saw133 |= (parts == [1,3,3])
        saw7   |= (parts == [7])
        if saw133 and saw7:
            break
    return (saw133, saw7)

# Exact certification when possible
def certify_C7sC3(f, try_exact=True):
    s133, s7 = frobenius_sieve_C7sC3(f)
    if not (s133 and s7):
        return "not C7 semiproduct C3", None
    if not try_exact:
        return "probable C7 semiproduct C3", None
    try:
        fQ = f.change_ring(QQ)
        G  = fQ.galois_group()
        return ("C7 semiproduct C3", G) if G.order()==21 else ("not C7 semiproduct C3", G)
    except RuntimeError:
        return "probable C7 semiproduct C3", None

# Dickson-based constructor: S = u^2 + 7 v^2 with 7 not dividing u or v
def C7sC3_from_Dickson(u, v, compact=True, certify=True):
    u = ZZ(u); v = ZZ(v)
    if u == 0 or v == 0 or u % 7 == 0 or v % 7 == 0:
        raise ValueError("Require u,v != 0 and 7 does not divide u or v.")
    S  = u*u + 7*v*v
    R.<x> = ZZ[]
    D7 = dickson_D(7, S)
    f  = (D7 - 2*u*(S**3)).change_ring(ZZ)
    f0 = f if not compact else flip_scale(f, scale=2)
    tag, G = certify_C7sC3(f0, try_exact=certify)
    return f0, tag

# Harvest a small box of (u,v)
def harvest_C7sC3_Dickson(u_max=3, v_max=3, compact=True, certify=True, verbose=True):
    hits = []
    for u in range(-u_max, u_max+1):
        for v in range(-v_max, v_max+1):
            if u == 0 or v == 0 or u % 7 == 0 or v % 7 == 0:
                continue
            try:
                f, tag = C7sC3_from_Dickson(u, v, compact=compact, certify=certify)
            except Exception:
                continue
            if not f.is_irreducible():
                continue
            if tag != "not C7 semiproduct C3":
                hits.append((u, v, f, tag))
                if verbose:
                    print(f"({u},{v})  {tag}:  {f}")
    return hits

# Save results
def save_hits_csv(hits, path="C7sC3_from_Dickson.csv", order="asc"):
    import csv
    with open(path, "w", newline="", encoding="utf-8") as fout:
        w = csv.writer(fout)
        w.writerow(["u","v","Polynomial","GroupTag","CoeffTuple"])
        for u,v,f,tag in hits:
            coeffs = f.list()                      # [a0,...,a7]
            if order == "desc":
                coeffs = list(reversed(coeffs))    # [a7,...,a0]
            w.writerow([u, v, str(f), tag, tuple(coeffs)])

# =============================== EXAMPLE USAGE =======================================
if __name__ == "__main__":
    f11, tag11 = C7sC3_from_Dickson(1, 1, compact=True, certify=True)
    print("Example (u,v)=(1,1):", f11)
    print("Tag:", tag11)
    hits = harvest_C7sC3_Dickson(u_max=3, v_max=3, compact=True, certify=True, verbose=True)
    print(f"Collected {len(hits)} candidates.")
    save_hits_csv(hits, path="C7sC3_from_Dickson.csv", order="asc")
    print("Saved to C7sC3_from_Dickson.csv")
\end{lstlisting}

\section{Sage helper: resolvents \texorpdfstring{$P^{(2)}$, $P^{(3)}$ and $C_7\rtimes C_3$}{P(2), P(3) and C7 semidirect C3}}
\label{app:d}

The script below takes a monic irreducible \(f\in\mathbb{Z}[x]\) of degree \(7\), constructs the splitting field, forms
\[
P^{(2)}(X)=\prod_{i<j}(X-(\alpha_i+\alpha_j)),\qquad
P^{(3)}(X)=\prod_{i<j<k}(X-(\alpha_i+\alpha_j+\alpha_k)),
\]
and rationalizes coefficients back to \(\Q\) (the resolvents are symmetric in the roots, hence lie in \(\Q[X]\)). It then checks the \(F_{21}\) pattern from Theorem~\ref{thm:F21-resolvents}.

\begin{lstlisting}[style=sagepy,caption={Build$ P^(2), P^(3) $and test the $F21 (C7 rtimes C3) $signature}]
from sage.all import *

# --- Utility: coerce algebraic-number coefficients back to QQ when possible ---
def _to_QQ_or_raise(a):
    try:
        return QQ(a)  # fast path: truly rational
    except (TypeError, ValueError):
        pass
    try:
        mp = a.minpoly()
        if mp.degree() == 1:
            r = -mp[0]/mp[1]
            return QQ(r)
    except Exception:
        pass
    raise TypeError("Coefficient is not rational: %r" % a)

def _rationalize_poly(P):
    RQ.<X> = QQ[]
    coeffs = P.list()  # ascending [c0, c1, ...]
    coeffs_QQ = [_to_QQ_or_raise(c) for c in coeffs]
    return RQ(coeffs_QQ)

# --- Build resolvent by explicit product over roots in the splitting field ---
def resolvent_sum_of_s_roots(f, s):
    """
    Return P^(s)(X) = product_{|I|=s} (X - sum_{i in I} alpha_i) in QQ[X],
    where alpha_i are the roots of f in its splitting field.
    Assumes f is monic irreducible of degree 7 and 1 <= s <= 3.
    """
    if f.degree() != 7 or not f.is_irreducible():
        raise ValueError("Expect monic irreducible f of degree 7.")
    L.<a> = f.splitting_field()
    roots = [rt[0] for rt in f.roots(L)]
    R.<X> = PolynomialRing(L)

    from itertools import combinations
    P = R(1)
    for I in combinations(range(7), s):
        sI = sum(roots[i] for i in I)
        P *= (X - sI)

    return _rationalize_poly(P)  # coefficients lie in QQ by symmetry

def P2_resolvent(f): return resolvent_sum_of_s_roots(f, 2)
def P3_resolvent(f): return resolvent_sum_of_s_roots(f, 3)

# --- Check the F21 pattern: P2 irreducible of degree 21; P3 = 21 + 7 + 7 ---
def is_F21_signature_via_resolvents(f, verbose=True):
    if f.degree() != 7 or not f.is_irreducible():
        raise ValueError("Input must be irreducible of degree 7 over QQ.")
    # F21 subset of A7 -> discriminant must be a square
    if f.discriminant().is_square() is False:
        if verbose:
            print("Disc not a square -> G not subset of A7, hence not C7 rtimes C3.")
        return False

    P2 = P2_resolvent(f)      # degree 21
    P3 = P3_resolvent(f)      # degree 35

    fac2 = P2.factor()
    fac3 = P3.factor()
    degs2 = sorted([g.degree() for (g,_) in fac2])
    degs3 = sorted([g.degree() for (g,_) in fac3])

    if verbose:
        print("deg P2 =", P2.degree(), " factor degrees:", degs2)
        print("deg P3 =", P3.degree(), " factor degrees:", degs3)

    cond2 = (P2.degree() == 21 and degs2 == [21])
    cond3 = (P3.degree() == 35 and degs3 == [7,7,21])
    return bool(cond2 and cond3)

# ------------------------------ Example ------------------------------
if __name__ == "__main__":
    R.<x> = QQ[]
    f = x^7 - 8*x^5 - 2*x^4 + 16*x^3 + 6*x^2 - 6*x - 2  # height-16 example
    print("f irreducible? ->", f.is_irreducible())
    ok = is_F21_signature_via_resolvents(f, verbose=True)
    print("F21 (C7 rtimes C3) resolvent signature? ->", ok)
\end{lstlisting}